\documentclass[10pt]{amsart}
\usepackage{amsmath, latexsym, amsthm, amsfonts,bm,amssymb,mathscinet} % Useful packages.
\usepackage[colorlinks,citecolor=blue,urlcolor=blue,linkcolor=magenta]{hyperref}
\usepackage{appendix}
\usepackage{natbib} % Package for the bibliography.
\usepackage{url} % Package for embedding url.
\usepackage{enumerate}
\usepackage{mathtools} % mathclap  
\usepackage{color} % package added 27 april 2014

\bibpunct{(}{)}{;}{a}{}{,} % Punctuation for citations.

% My definitions for theorems etc.
%\doublespacing
\theoremstyle{plain}
\newtheorem{thm}{Theorem}
\newtheorem{prop}{Proposition}
\newtheorem{lemma}{Lemma}

\newtheorem{assump}{Assumption}

\theoremstyle{remark}
\newtheorem{rem}{Remark}

% List enumeration style.

% Commands used in the text.

\def\ex{{\rm {\mathbb E\,}}}

\newcommand{\pp}{\mathbb{P}}
\newcommand{\ee}{\mathbb{E}}

% definitions frank
\renewcommand{\th}{\theta}
\newcommand{\la}{\lambda}

\newcommand{\bb}[1]{\mathbb{ #1}}

\newcommand{\dd}{\mathrm{d}}
\newcommand{\eps}{\varepsilon}

\newcommand{\one}{\mathbf{1}}
\newcommand{\half}{\frac{1}{2}}

\allowdisplaybreaks

% Cyrillic

\usepackage[OT2,OT1]{fontenc}
\newcommand\cyr{%
\renewcommand\rmdefault{wncyr}%
\renewcommand\sfdefault{wncyss}%
\renewcommand\encodingdefault{OT2}%
\normalfont
\selectfont}
\DeclareTextFontCommand{\textcyr}{\cyr}

% Box around

%\usepackage{mdframed} %commented 27 april 2014
%\newmdenv{prob} %commented 27 april 2014

\newcommand{\si}{\sigma}

\newcommand{\ba}{\mathbf{a}}

\begin{document}

\title[A non-parametric Bayesian approach to decompounding]{A non-parametric Bayesian approach to decompounding from high frequency data}

\author{Shota Gugushvili}
\address{Mathematical Institute\\
Leiden University\\
P.O. Box 9512\\
2300 RA Leiden\\
The Netherlands}
\email{shota.gugushvili@math.leidenuniv.nl}

\author{Frank van der Meulen}
\address{Delft Institute of Applied Mathematics\\
Faculty of Electrical Engineering, Mathematics and Computer Science\\
Delft University of Technology\\
Mekelweg 4\\
2628 CD Delft\\
The Netherlands}
\email{f.h.vandermeulen@tudelft.nl}

\author{Peter Spreij}
\address{Korteweg-de Vries Institute for Mathematics\\
University of Amsterdam\\
P.O.\ Box 94248\\
1090 GE Amsterdam\\
The Netherlands}
\email{spreij@uva.nl}

\thanks{The research leading to these results has received funding from the European Research Council under ERC Grant Agreement 320637.}

\subjclass[2000]{Primary: 62G20, Secondary: 62M30}

\keywords{Compound Poisson process; Non-parametric Bayesian estimation; Posterior contraction rate; high frequency observations.}

\begin{abstract}

Given a sample from a discretely observed compound Poisson process, we consider non-parametric estimation of the density $f_0$ of its jump sizes, as well as of its intensity $\lambda_0.$ We take a Bayesian approach to the problem and specify the prior on $f_0$ as the Dirichlet location mixture of normal densities. An independent prior for $\lambda_0$ is assumed to be compactly supported and to possess a positive density with respect to the Lebesgue measure. We show that under suitable assumptions the posterior contracts around the pair $(\lambda_0,f_0)$ at essentially (up to a logarithmic factor) the $\sqrt{n\Delta}$-rate, where $n$ is the number of observations and $\Delta$ is the mesh size at which the process is sampled. The emphasis is on high frequency data, $\Delta\to 0$, but the obtained results are also valid for fixed $\Delta$. In either case we assume that $n\Delta\rightarrow\infty$. Our main result implies existence of Bayesian point estimates converging (in the frequentist sense, in probability) to $(\lambda_0,f_0)$ at the same rate. 

We also discuss a practical implementation of our approach. The computational problem is dealt with by inclusion of auxiliary variables and we develop a Markov Chain Monte Carlo algorithm that samples from the joint distribution of the unknown parameters in the mixture density and the introduced auxiliary variables. Numerical examples illustrate the feasibility of this approach. 
\end{abstract}

\date{\today}

\maketitle

\section{Introduction}
\label{intro}

\subsection{Problem formulation and announcement of the main result}\label{section:pf}
Let $N=(N_{t},\, t\geq 0)$ be a Poisson process with a constant intensity $\lambda>0$ and let $Y_1,Y_2,Y_3\ldots$ be a sequence of independent random variables independent of $N$ and having a common distribution function $F$ with density $f$ (with respect to the Lebesgue measure).  A compound Poisson process (abbreviated CPP) $X=(X_t,\, t\geq 0)$ is defined as
\begin{equation}
\label{cpp}
X_t=\sum_{j=1}^{N_t} Y_j,
\end{equation}
where the sum over an empty set is by definition equal to zero. CPPs form a basic model in a variety of applied fields, most notably in e.g.\ queueing and risk theory, see~\cite{embrechts97} and~\cite{prabhu98} and the references therein,  but also in other fields of science, see e.g.\  \cite{Alexandersson1985}, \cite{br1993} for stochastic models for precipitation, \cite{katz} on modelling of hurricane damage, or \cite{scalas} for applications in economics and finance.

Suppose that corresponding to the `true' parameter values $\lambda=\lambda_0$ and $f=f_0,$ a discrete time sample  $X_{\Delta},X_{2\Delta},\ldots,X_{n\Delta}$ is available from \eqref{cpp}, where $\Delta>0.$ Such a discrete time observation scheme is common in a number of applications of CPP, e.g.\ in the precipitation models of the above references.
Based on the sample  $\mathcal{X}_n^\Delta=(X_\Delta,X_{2\Delta},\ldots,X_{n\Delta}),$ we are interested in (non-parametric) estimation of $\lambda_0$ and $f_0.$ Before proceeding further, we notice that by the stationary independent increments property of a compound Poisson process, the random variables $Z_i^{\Delta} = X_{i\Delta}-X_{(i-1)\Delta}$, $1\le i \le n$, are independent and identically distributed. Each $Z_i^{\Delta}$ has the same distribution as the random variable
\begin{equation}
\label{psum}
Z^{\Delta}=\sum_{j=1}^{T^{\Delta}} Y_j,
\end{equation}
where $T^{\Delta}$ is independent of the sequence $Y_1, Y_2,\ldots$ and has a Poisson distribution with parameter $\Delta\lambda$. Hence, our problem is equivalent to estimating (non-parametrically) $\lambda_0$ and $f_0$ based on the sample $\mathcal{Z}_n^{\Delta}=(Z_1^{\Delta},Z_2^{\Delta},\ldots,Z_n^{\Delta})$. 
We will henceforth use this alternative formulation of the problem. Our emphasis is on \emph{high frequency} data, $\Delta=\Delta_n\to 0$ as $n\rightarrow\infty,$ but the obtained results are also valid for \emph{low frequency} observations, i.e.\ for fixed $\Delta$. 

Our main result is on the contraction rate of the posterior distribution, which we show to be, up to a logarithmic factor, $(n\Delta)^{-1/2}$. A by now standard approach to obtain contraction rates in an IID setting is to verify the assumptions of the fundamental Theorem~2.1 in \cite{ghosal00}. It should be noted that in the present high frequency setting, this theorem is not applicable. One of the model assumptions underlying this theorem, which is satisfied in  \cite{gugu15}, is that one deals with samples of a \emph{fixed} distribution, whereas in our present high frequency observation regime the distribution of $Z^\Delta$ is \emph{varying}, with the Dirac distribution concentrated at zero as its limit for $\Delta\to 0$. Therefore we propose an alternative approach, circumventing the use of the cited Theorem~2.1. The theoretical contribution of the present paper is therefore not only the statement of the main result itself, but also its proof. Next to this we also discuss a practical implementation of our non-parametric Bayesian approach, a Markov Chain Monte Carlo algorithm that samples from the joint distribution of the unknown parameters in the mixture density and certain introduced auxiliary variables. 

\subsection{Literature review and present approach}

Because adding a Poisson number of $Y_j$'s amounts to compounding their distributions, the problem of recovering the intensity $\lambda_0$ and the density $f_0$ from the observations $Z_i$'s can be referred to as decompounding. Decompounding already has some history: the early contributions~\cite{buchmann03} and~\cite{buchmann04} dealt with estimation of the distribution function $F_0,$ paying particular attention to the case when $F_0$ is discrete, while the later contributions~\cite{comte13},~\cite{duval12} and~\cite{vanes07} concentrated on estimation of the density $f_0$ instead. More (frequentist) theory on statistical inference on compound Poisson processes (and more generally on L\'evy processes) can be found in the volume~\cite{levymatters}, with the survey paper~\cite{cgc}  devoted to statistical methods for  high frequency discrete observations, with a special section on compound Poisson processes. Other references on statistics for L\'evy processes in the high frequency data setting are \cite{comte11}, \cite{cgc2010a}, \cite{cgc2010b}, \cite{fl2008}, \cite{fl2009}, and \cite{uk2011}.
All these approaches are frequentist in nature. On the other hand, theoretical and computational advances made over the recent years have shown that a non-parametric Bayesian approach is feasible in various statistical settings; see e.g.\ \cite{hjort10} for an overview. This is the approach we will take in this work to estimate $\lambda_0$ and $f_0.$ 

To the best of our knowledge, non-parametric Bayesian approach to inference for (a class of) L\'evy processes was first considered in \cite{gugu15}. That paper, contrary to the present context, dealt with observations at fixed equidistant times, and was strongly based on an application of Theorem~2.1 of \cite{ghosal00}, as already alluded to in the Problem formulation of Section~\ref{section:pf}. 
The present work complements the results from \cite{gugu15}, in the sense that we now allow high frequency observations, which requires a  substantially different route to prove our results, as we will explain in more detail in Section~\ref{int_res}. 

We will study the non-parametric Bayesian approach to decompounding from a frequentist point of view (in the sense specified below), so that one may also think of it as a means for obtaining a frequentist estimator. Advantages of the non-parametric Bayesian approach include automatic quantification of uncertainty in parameter estimates through Bayesian posterior credible sets and automatic selection of the degree of smoothing required in non-parametric inferential procedures.

%-------

\subsection{Results}
\label{int_res}
The non-parametric class $\mathcal{F}$ of densities $f$ that we consider is that of location mixtures of normal densities. So we consider densities specified by
\begin{equation}\label{eq:fh}
f(x)=f_{ H , \sigma } (x)=\int \phi_{\sigma }(x-z)\mathrm{d}H(z),
\end{equation}
where $\phi_{\sigma }$ denotes the density of the normal distribution with mean zero and variance $\sigma^2$ and $H$ is a mixing measure.
These mixtures form a rich and flexible class of densities, see \cite{mw1992} and \cite{mp2000}, that are capable of closely approximating many densities that themselves are not representable in this way. The resulting mixture densities will be infinitely smooth, which is arguably the case in many, if not most, practical applications.

Bayesian estimation requires specification of prior distributions on $\lambda$ and $f.$ We propose independent priors  on $\lambda$ and $f$ that we denote by  $\Pi_1$ and $\Pi_2,$ respectively. For $f$, we take a   Dirichlet mixture of normal densities as a prior.  This type of prior in the context of Bayesian density estimation has  been introduced in \cite{ferguson83} and \cite{lo84}; for recent references see e.g.\ \cite{ghosal01}. The prior for $f$ is defined as  the law of the function
$f_{ H , \sigma }$ as in \eqref{eq:fh}, with $H$ assumed to follow a Dirichlet process prior $D_\alpha$ with base measure $\alpha$ and $\sigma$ a-priori independent with distribution  $\Pi_3$. Recall that a Dirichlet process $D_{\alpha}$ on $\mathbb{R}$ with the base measure $\alpha$ defined on the Borel $\sigma$-algebra $\mathcal{B}(\mathbb{R})$ (we assume $\alpha$ to be non-negative and $\sigma$-additive) is a random probability measure $G$ on $\mathbb{R},$ such that for every finite and measurable partition $B_1,B_2,\ldots,B_k$ of $\mathbb{R},$ the probability vector $(G(B_1),G(B_2),\ldots,G(B_k))$ possesses the Dirichlet distribution on the $k$-dimensional simplex with parameters $(\alpha(B_1),\alpha(B_2),\ldots\alpha(B_k)).$ See e.g.\ the original paper~\cite{ferguson73}, or the overview article~\cite{ghosal10} for more information on Dirichlet process priors.  
%
%Location mixtures of  normals form a rich class of densities that are capable of closely approximating many densities that themselves are not representable as such mixtures. 

A nonparametric Bayesian approach to density estimation employing a Dirichlet mixture of normal densities as a prior can in very rough sense be thought of as a Bayesian counterpart of kernel density estimation (with a Gaussian kernel), cf.\ \cite{vdv07}, p.~697.

With the sample size $n$ tending to infinity, the Bayesian approach should be able to discern the true parameter pair $(\lambda_0,f_0)$ with increasing accuracy. We can formalise this by requiring, for instance, that for any fixed neighbourhood $A$ (in an appropriate topology) of $(\lambda_0,f_0),$ $\Pi(A^c|\mathcal{Z}_{n}^\Delta)\rightarrow 0$ in $\mathbb{Q}_{\lambda_0,f_0}^{\Delta, n}$-probability. %Here $\Pi= \Pi_1\times \Pi_2$ 
Here $\Pi$ is used as a shorthand notation for the posterior distribution of $(\lambda,f)$
and we use  $\mathbb{Q}_{\lambda_0,f_0}^{\Delta}$ to denote the law of the random variable $Z^{\Delta}$ in \eqref{psum} and  $\mathbb{Q}_{\lambda_0,f_0}^{\Delta, n}$ the law of $\mathcal{Z}_n^{\Delta}$. More generally, one may take a sequence of shrinking neighbourhoods $A_n$ of $(\lambda_0,f_0)$ and try to determine the rate at which the neighbourhoods $A_n$ are allowed to shrink, while still capturing most of the posterior mass. This rate is referred to as a posterior convergence rate (we will give the precise definition in Section~\ref{results}). Two fundamental references dealing with establishing it in various statistical settings are~\cite{ghosal00} and~\cite{ghosal01}. This convergence rate can be thought of as an analogue of the convergence rate of a frequentist estimator. The analogy can be made precise: contraction of the posterior distribution at a certain rate implies existence of a Bayes point estimate with the same convergence rate (in the frequentist sense); see Theorem 2.5 in~\cite{ghosal00} and the discussion on pp. 506--507 there.

%Obviously, for our programme to be successful, $\Delta$ has to satisfy certain assumptions. We will assume that $\Delta=n^{-\alpha},$ where $0\leq\alpha<1,$ which covers both the case of the so called low frequency observation (fixed $\Delta$ independent of $n$) and the high frequency observation schemes ($\Delta\rightarrow 0$). With this assumption we will in any case have $n\Delta\rightarrow\infty,$ which is a necessary condition for consistent estimation of $(\lambda_0,f_0),$ as it ensures that asymptotically we observe an infinite number of jumps in the process.

Obviously, for our programme to be successful, $\Delta$ has to satisfy the assumption $n\Delta\rightarrow\infty,$ which is a necessary condition for consistent estimation of $(\lambda_0,f_0),$ as it ensures that asymptotically we observe an infinite number of jumps in the process. We cover both the case of so called high frequency observation schemes ($\Delta\rightarrow 0$) as well as low frequency observations (fixed $\Delta$). A sufficient condition, which covers both observation regimes and which relates $\Delta$ to $n$,  is  $\Delta=n^{-\alpha}$, where $0\leq\alpha<1$.

We note that in~\cite{ghosal06} and~\cite{tang07} non-parametric Bayesian inference for Markov processes is studied, of which compound Poisson processes form a particular class, but these papers deal with estimation of the transition density of a discretely observed Markov process, which is   different from the problem  we consider here. A parametric Bayesian approach to inference for compound Poisson processes is studied in \cite{insua12}, Sections 5.5 and 10.3.

The main result of our paper is Theorem~\ref{mainthm}, in which we state sufficient conditions on the prior that yield a posterior rate of contraction of the order $(\log^{\kappa}(n\Delta))/ \sqrt{n\Delta},$ for some constant $\kappa>0.$  We argue that this rate is a nearly (up to a logarithmic factor) optimal posterior contraction rate in our problem. Our main result complements the one in \cite{gugu15}, in that it treats both the low and high frequency observation schemes simultaneously, with emphasis on the latter. We note (again) a fundamental difference between the present paper and \cite{gugu15}, when it comes down to the techniques to prove the main result. As Theorem~2.1 of \cite{ghosal00} cannot immediately be used, we take an alternative tour that avoids this theorem, but instead refines a number of technical results involving properties of statistical tests that form essential ingredients of the proof in \cite{ghosal00}. These refined results are then used as key technical steps in a direct proof of our Theorem~\ref{mainthm}.
Furthermore, it establishes the posterior contraction rate for infinitely smooth jump size densities $f_0,$ which is not covered by \cite{gugu15}. On the other hand, \cite{gugu15} deals with multi-dimensional CPPs, while in this paper we consider only the one-dimensional case. Finally, 
%unlike \cite{gugu15}, 
in this work we also discuss a practical implementation of our non-parametric Bayesian approach. The computational problem is dealt with by inclusion of auxiliary variables. More precisely, we show how a Markov Chain Monte Carlo algorithm can be devised that samples from the joint distribution of the unknown parameters in the mixture density and the introduced auxiliary variables. Numerical examples illustrate the feasibility of this approach. 

%------------

\subsection{Organisation}
The remainder of the paper is organised as follows. In the next section we state some preliminaries on the likelihood, prior and notation. 
%In section~\ref{scaled} we motivate the use of the scaled Hellinger metric to define neighbourhoods for which posterior contraction rate is derived in case the observations are sampled at high frequency.
%In Section~\ref{results} we present the main result on posterior contraction rate (Theorem~\ref{mainthm}), whose proof is given in Section~\ref{sec:proofmainthm}.   We discuss the numerical implementation of our results in Section~\ref{section:algo}. Technical lemmas and their proofs used to prove the main theorem are gathered in the appendices.
In Section~\ref{results} we first motivate the use of the scaled Hellinger metric to define neighbourhoods for which posterior contraction rate is derived in case the observations are sampled at high frequency.
Then we present the main result on the posterior contraction rate (Theorem~\ref{mainthm}), whose proof is given in Section~\ref{sec:proofmainthm}.   We discuss the numerical implementation of our results in Section~\ref{section:algo}. Technical lemmas and their proofs used to prove the main theorem are gathered in the Appendix.

% \subsubsection{Probability measures}

%%%%%%%%%%%%%
\section{Preliminaries and notation}

\subsection{Likelihood, prior and posterior}
We are interested in Bayesian inference with Bayes' formula. Therefore we need to specify the likelihood in our model. We use the following notation:
\smallskip
\begin{center}
\begin{tabular}{|l|l|}
\hline
$\mathbb{P}_f$ & law of $Y_1$  (law of the jumps of the CPP) \\
$\mathbb{Q}_{\lambda, f}^{\Delta}$ & law of $Z_1^{\Delta}$ (law of the increments of the discretely observed CPP) \\
$\mathbb{Q}_{\lambda,f}^{\Delta, n}$ &  law of $\mathcal{Z}_n^{\Delta}$ (joint law of the increments of the discretely observed CPP)\\
$\mathbb{R}_{\lambda, f}^{\Delta}$ & law of $(X_t,\, t\in [0,\Delta])$ (law of the CPP on $[0,\Delta]$)\\
\hline
\end{tabular}
\end{center}

\smallskip
The characteristic function of the Poisson sum $Z^{\Delta}$ defined in \eqref{psum} is given by
\begin{equation*}
\phi(t)=e^{-\lambda\Delta+\lambda\Delta \phi_{f}(t)},
\end{equation*}
where $\phi_{f}$ is the characteristic function of $f.$ This can be rewritten as
\begin{equation*}
\phi(t)=e^{-\lambda\Delta}+(1-e^{-\lambda\Delta})\frac{1}{e^{\lambda\Delta}-1}\left( e^{\lambda\Delta \phi_{f}(t)} -1 \right),
\end{equation*}
which, using the fact that $\phi_{f}$ vanishes at infinity, shows that the distribution of $Z^{\Delta}$ is a mixture of a point mass at zero and an absolutely continuous distribution. Letting $t\rightarrow\infty,$ we get that $\phi(t)\rightarrow e^{-\lambda\Delta}.$ Hence $\lambda$ is identifiable from the law of $Z^{\Delta}$, and then so is $f$. The density of the law $\mathbb{Q}_{\lambda, f}^{\Delta}$ of $Z^{\Delta}$ with respect to the measure $\mu$, which is the sum of Lebesgue measure and the Dirac measure concentrated at zero, can in fact be written explicitely as (cf.\ p.~681 in~\cite{vanes07} and Proposition 2.1 in~\cite{duval12})
\begin{equation}
\label{density}
\frac{\dd \mathbb{Q}_{\lambda, f}^{\Delta}}{\dd\mu}(x)=e^{-\lambda\Delta}\one_{\{0\}}(x)+(1-e^{-\lambda\Delta})\sum_{m=1}^{\infty} a_m(\lambda\Delta) f^{\ast m}(x)\one_{\mathbb{R}\setminus\{0\}}(x),
\end{equation}
where $\one_A$ denotes the indicator of a set $A$, 
\begin{equation}
\label{am}
a_m(\lambda\Delta)=\frac{1}{ e^{\lambda\Delta} -1 } \frac{(\lambda\Delta)^m}{m!},
\end{equation}
and $f^{\ast m}$ denotes the $m$-fold convolution of $f$ with itself. However, the expression \eqref{density} is useless for Bayesian computations. To work around this problem, we will employ a different dominating measure. Consider the law $\mathbb{R}_{\lambda,f}^{\Delta}$ of $(X_t, t\in [0,\Delta])$. 
%Hence we need to find another dominating measure for $\mathbb{Q}_{\lambda,f}$ in our setup. A natural idea is to take as the dominating measure the law of the compound Poisson sum for some fixed parameter values $(\widetilde{\lambda},\widetilde{f}).$ 
By the Theorem on p.~261 in~\cite{skorohod64}, $\mathbb{R}_{\lambda,f}^{\Delta}$ is absolutely continuous with respect to $\mathbb{R}_{\widetilde{\lambda},\widetilde{f}}^{\Delta}$ if and only if $\mathbb{P}_{f}$ is absolutely continuous with respect to $\mathbb{P}_{\widetilde{f}}$ (we of course assume that $\lambda,\widetilde{\lambda}>0$). %We shall need something more than this, namely the equivalences $\mathbb{P}_{\lambda,f}  \sim \mathbb{P}_{\widetilde{\lambda},\widetilde{f}}$ and $\mathbb{P}_{f}  \sim \mathbb{P}_{\widetilde{f}}$ (this is needed in Section~\ref{results} in order to define certain divergences between these probability measures and also in the proofs). 
A simple condition to ensure the latter is to assume that $\widetilde{f}$ is continuous and does not take the value zero on $\mathbb{R}.$ 
%A specific choice of $\widetilde{\lambda}$ and $\widetilde{f}$ is unimportant for our purposes: we will only assume that $0<\widetilde{\lambda},\widetilde{f}\leq 1.$\footnote{waar gebruiken we $\widetilde{f}\leq 1.$?}

Define the random measure $\mu$ by
\begin{equation*}
\mu ( B ) = \{ \# t :(t,X_t-X_{t-})\in B \}, \quad B \in \mathcal{B}([0,\Delta])\otimes\mathcal{B}(\mathbb{R}\setminus\{0\}).
\end{equation*}
Under $\mathbb{R}_{{\lambda},{f}},$ the random measure $\mu$ is a Poisson point process on $[0,\Delta]\times(\mathbb{R}\setminus\{0\})$ with intensity measure $ {\Lambda}(dt,dx)={\lambda} dt {f}(x)dx,$ which follows e.g.\ from Theorem 1 on p.~69 and Corollary on p.~64 in~\cite{skorohod64}. By formula (46.1) on p.~262 in~\cite{skorohod64}, we have
\begin{equation}\label{eq:contlik}
\frac{\mathrm{d} \mathbb{R}_{\lambda,f}^{\Delta}}{ \mathrm{d}\mathbb{R}_{\widetilde{\lambda},\widetilde{f}}^{\Delta} }(X)=\exp\left( \int_0^{\Delta} \int_{\mathbb{R}} \log\left( \frac{\lambda f(x)}{{\widetilde{\lambda}} \widetilde{f}(x)} \right) \mu(dt,dx) - \Delta(\lambda-{\widetilde{\lambda}}) \right).
\end{equation}
 By Theorem 2 on p.~245 in~\cite{skorohod64} and Corollary 2 on p.~246 there, the density $k_{\lambda,f}^{\Delta}$ of $\mathbb{Q}_{\lambda,f}^{\Delta}$ with respect to $\mathbb{Q}_{\widetilde{\lambda},\widetilde{f}}^{\Delta}$ is given by the conditional expectation
\begin{equation}
\label{eq:k}
k_{\lambda,f}^{\Delta}(x)=\ex_{\widetilde{\lambda},\widetilde{f}}\left( \frac{\mathrm{d} \mathbb{R}_{\lambda,f}^{\Delta}}{ \mathrm{d}\mathbb{R}_{\widetilde{\lambda},\widetilde{f}}^{\Delta} }(X)\, \middle|\, X_{\Delta}=x \right),
\end{equation}
where the subscript in the conditional expectation operator signifies the fact that it is evaluated under the probability $\mathbb{R}_{ \widetilde{\lambda},\widetilde{f} }^{\Delta}.$
Hence the likelihood (in the parameter pair $(\lambda,f)$) associated with the sample $\mathcal{Z}_n^{\Delta}$ is given by the product
\begin{equation}
\label{ln}
L_n^{\Delta}(\lambda,f)=\prod_{i=1}^n k_{\lambda,f}^{\Delta}(Z_i^{\Delta}).
\end{equation}
An advantage of specifying the likelihood in this manner is that it allows one to reduce some of the difficult computations for the laws $\mathbb{Q}_{\lambda,f}^{\Delta}$ to those for the laws $\mathbb{R}_{\lambda,f}^{\Delta},$ which are simpler.

%
% Hence our prior $\Pi$ on $f$ is a Dirichlet mixture of normals, see e.g.\~\cite{ghosal01} for additional information and references on such a prior. With probability one, realisations of a Dirichlet process prior are discrete measures (see e.g.\~\cite{ferguson73}, p.\ 218), so that realisations of Dirichlet location mixtures of the normals are finite or countable sums of the type $\sum_j \phi_{\sigma}(x-z_j) p_j $ for probability vectors $\{p_j\}$ (note in particular that realisations of Dirichlet location mixtures of the normals are continuous and strictly positive). 

Observe that the priors on $\lambda$ and $f$ indirectly induce the prior  $\Pi= \Pi_1\times \Pi_2$ on the collection of densities $k_{\lambda,f}^{\Delta}.$ We will indiscriminately use the symbol $\Pi$ to signify both the prior on $(\lambda,f),$ but also on the density $k_{\lambda,f}^{\Delta}.$ The posterior in the first case will be understood as the posterior for the pair $(\lambda,f),$ while in the second case as the posterior for the density $k_{\lambda,f}^{\Delta}.$ 
We will often use the same symbol $\Pi$ to denote the posterior distribution of $(\lambda,f)$ and on the density $k_{\lambda,f}^{\Delta}$.
This simplifies notationally some of the formulations below.

%Let
%\begin{equation*}
%\mathcal{F}=\{ f_{H,\sigma}: H\in\mathcal{M}({B}), \underline{\sigma}\leq\sigma\leq\overline{\sigma} \},
%\end{equation*}
%where by definition $\mathcal{M}(B)$ is the collection of all probability distributions on the set $B\subset\mathbb{R}.$

By Bayes' theorem, the posterior measure of any measurable set 
%$A\subset[\underline{\lambda},\overline{\lambda}]\times\mathcal{F}$ 
$A\subset(0,\infty)\times\mathcal{F}$ 
is given by
\begin{equation*}
\Pi(A|\mathcal{Z}_n^{\Delta})=\frac{ \iint_A L_n^{\Delta}(\lambda,f) \dd \Pi_1(\lambda) \mathrm{d}\Pi_2(f) }{ \iint L_n^{\Delta}(\lambda,f) \dd\Pi_1(\lambda) \mathrm{d}\Pi_2(f) }.
\end{equation*}
Upon setting $\overline{A}=\{ k_{\lambda,f}:(k,\lambda)\in A \}$  and recalling our conventions above, this can also be written as
\begin{equation*}
\Pi(\overline{A}|\mathcal{Z}_n^{\Delta})=\frac{ \int_{\overline A} L_n^{\Delta}(k) \mathrm{d}\Pi(k) }{ \int L_n^{\Delta}(k) \mathrm{d}\Pi(k) }.
\end{equation*}
Once the posterior is available, one can next proceed with computation of other quantities of interest in Bayesian statistics, such as Bayes point estimates or credible sets.

\subsection{Notation}
Throughout the paper we will use the following notation to compare two sequences $\{a_n\}$ and $\{b_n\}$ of positive real numbers: $a_n\lesssim b_n$ will mean that there exists a constant $C>0$ that is independent of $n$ and is such that $a_n\leq C b_n,$ while $a_n\gtrsim b_n$ will signify the fact that $a_n\geq C b_n.$ 
%$a_n\gtrsim b_n$ will mean that there exists a constant $C>0$ that is independent of $n$ and is such that $C a_n\geq b_n$. 
%$a_n\asymp b_n$ will mean that $a_n$ and $b_n$ are asymptotically of the same order, i.e.\ $a_n\lesssim b_n$ and $a_n\gtrsim b_n$.
%\ $-\infty<\liminf_{n\rightarrow\infty} a_n/b_n\leq \limsup_{n\rightarrow\infty} a_n/b_n<\infty.$

%\subsection{Distances}
Next we introduce various notions of distances between probability measures.
The Hellinger distance $h(\mathbb{Q}_{0},\mathbb{Q}_{1})$ between two probability laws $\mathbb{Q}_{0}$ and $\mathbb{Q}_{1}$ on a measurable space $(\Omega,\mathfrak{F})$ is defined as
\[
h(\mathbb{Q}_{0},\mathbb{Q}_{1})=\left( \int \left( \mathrm{d}\mathbb{Q}_{0}^{1/2} - \mathrm{d}\mathbb{Q}_{1}^{1/2} \right)^2 \right)^{1/2}.
\]
Assume further $\mathbb{Q}_0\ll\mathbb{Q}_1$. The Kullback-Leibler (or informational) divergence $\mathrm{K}(\mathbb{Q}_{0},\mathbb{Q}_{1})$  is defined as
\[\mathrm{K}(\mathbb{Q}_{0},\mathbb{Q}_{1})= \int \log \left(\frac{ \mathrm{d}\mathbb{Q}_{0}}{ \mathrm{d}\mathbb{Q}_{1} } \right) \mathrm{d}\mathbb{Q}_{0},
\]
while the $\mathrm{V}$-discrepancy is defined through
\[	\mathrm{V}(\mathbb{Q}_{0},\mathbb{Q}_{1})
=\int \log^2 \left(\frac{ \mathrm{d}\mathbb{Q}_{0}}{ \mathrm{d}\mathbb{Q}_{1} } \right) \mathrm{d}\mathbb{Q}_{0}.
\]
Here is some additional notation. For $f,g$ nonnegative integrable functions, not necessarily densities, we write
\begin{align*}
h^2(f,g) & =\int (\sqrt{f}-\sqrt{g})^2, \\
\mathrm{K}(f,g) & = \int \log\frac{f}{g}\,f -\int f + \int g\\
\mathrm{V}(f,g) & = \int \log^2\frac{f}{g}\,f.
\end{align*}
Note that these `distances' are all nonnegative and only zero if $f=g$ a.e. If $f$ and $g$ are densities of probability measures $\mathbb{Q}_0$ and $\mathbb{Q}_1$ on $(\mathbb{R},\mathcal{B})$ respectively, then the above `distances' reduce to the previously introduced ones.

We will also use $\mathrm{K}(x,y)=x\log\frac{x}{y}-x+y$ for $x,y>0$. Note that also $\mathrm{K}(x,y)\geq 0$ and $\mathrm{K}(x,y)=0$ if and only if $x=y$.

%\section{Scaled distances}\label{scaled}

%%%%%%%%%%%%%%%%%%%%%%%%%
\section{Main result on posterior contraction rate}
\label{results}

Denote the true parameter values for the compound Poisson process by $(\lambda_0, f_0)$. 
Recall that the problem is to estimate $f_0$ and $\lambda_0$ based on the observations $\mathcal{Z}^\Delta_n$ and that $\Delta\to 0$ in a high frequency regime. To say that a pair $(f,\lambda)$ lies in a neighbourhood of $(f_0,\lambda_0)$, one needs a notion of distance on the corresponding measures $\mathbb{Q}^{\Delta}_{\lambda,f}$ and $\mathbb{Q}^{\Delta}_{\lambda_0,f_0}$, the two possible induced laws of $Z^\Delta_i=X_{i\Delta}-X_{(i-1)\Delta}$. The Hellinger distance is a popular and rather reasonable choice to that end in non-parametric Bayesian statistics. However, for $\Delta\to 0$ the Hellinger metric $h$ between those laws automatically tends to 0. The first assertion of Lemma~\ref{lemma:delta} below states that $h(\mathbb{Q}^{\Delta}_{\lambda,f},\mathbb{Q}^{\Delta}_{\lambda_0,f_0})$ is of order $\sqrt{\Delta}$ when $\Delta\to 0$. This motivates to replace the ordinary Hellinger metric $h$ with the scaled metric $h^\Delta=h/\sqrt{\Delta}$ in our asymptotic analysis for high frequency data.   Of course, for fixed $\Delta$ (in which case one can take $\Delta=1$ w.l.o.g.), nothing changes with this replacement. The lemma also shows that the Kullback-Leibler divergence and the V-discrepancy are of order $\Delta$ for $\Delta\to 0$. Therefore we will also use the scaled distances $\mathrm{K}^\Delta=\mathrm{K}/\Delta$ and $\mathrm{V}^\Delta=\mathrm{V}/\Delta$ 

\begin{lemma}\label{lemma:delta}
The following expressions hold true: \label{eq:kdelta}
\begin{align}
\lim_{\Delta\to 0}\frac{1}{\Delta}
h^2(\mathbb{Q}^\Delta_{\lambda,f},\mathbb{Q}^\Delta_{\lambda_0,f_0}) & =
h^2(\lambda f,\lambda_0 f_0)=\int(\sqrt{\lambda f(x)}-\sqrt{\lambda_0f_0(x)})^2\,\dd x 
\label{eq:hhdelta}, \\
\lim_{\Delta\to 0}\frac{1}{\Delta}\mathrm{K}(\mathbb{Q}^\Delta_{\lambda,f},\mathbb{Q}^\Delta_{\lambda_0,f_0}) & = \mathrm{K}(\lambda f,\lambda_0 f_0)=\lambda \mathrm{K}(f,f_0)+\mathrm{K}(\lambda,\lambda_0) \label{eq:kkdelta}, \\
\lim_{\Delta\to 0}\frac{1}{\Delta}\mathrm{V}(\mathbb{Q}^\Delta_{\lambda,f},\mathbb{Q}^\Delta_{\lambda_0,f_0}) & = \mathrm{V}(\lambda f,\lambda_0 f_0)=\int \log^2\frac{\lambda f(x)}{\lambda_0 f_0(x)}\,\lambda f(x)\,\dd x. \label{eq:vvdelta}
\end{align}
\end{lemma}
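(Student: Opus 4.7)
The plan is to work directly from the explicit density $q^\Delta_{\lambda,f}:=\dd\mathbb{Q}^\Delta_{\lambda,f}/\dd\mu$ given in \eqref{density}. Using the identity $(1-e^{-\lambda\Delta})a_m(\lambda\Delta)=e^{-\lambda\Delta}(\lambda\Delta)^m/m!$, this rewrites as
$$q^\Delta_{\lambda,f}(x)=e^{-\lambda\Delta}\one_{\{0\}}(x)+e^{-\lambda\Delta}\bigl(\lambda\Delta f(x)+r^\Delta_{\lambda,f}(x)\bigr)\one_{\mathbb{R}\setminus\{0\}}(x),$$
with tail $r^\Delta_{\lambda,f}:=\sum_{m\ge 2}(\lambda\Delta)^m f^{*m}/m!$ of order $\Delta^2$ both pointwise and in $L^1$. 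Thus on $\mathbb R\setminus\{0\}$ the density behaves as $\lambda\Delta f(x)(1+O(\Delta))$, and this single pointwise observation drives all three limits.

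I would then split each integral into its atomic part at $\{0\}$ and its continuous part on $\mathbb R\setminus\{0\}$ and treat them separately. The atomic contributions follow from elementary Taylor expansion of $e^{-\lambda\Delta}$: for \eqref{eq:hhdelta} the atom contributes $(e^{-\lambda\Delta/2}-e^{-\lambda_0\Delta/2})^2=O(\Delta^2)$ and for \eqref{eq:vvdelta} it contributes $e^{-\lambda\Delta}((\lambda_0-\lambda)\Delta)^2=O(\Delta^2)$, so both vanish after division by $\Delta$; for \eqref{eq:kkdelta} the atom gives $e^{-\lambda\Delta}(\lambda_0-\lambda)\Delta$, contributing $\lambda_0-\lambda$ to the limit and eventually combining with the continuous part into $\mathrm K(\lambda,\lambda_0)$.

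For the continuous parts I would write $q^\Delta_{\lambda,f}(x)=\lambda\Delta f(x)e^{-\lambda\Delta}(1+\varepsilon^\Delta_{\lambda,f}(x))$ with $\varepsilon^\Delta_{\lambda,f}(x)=O(\Delta)$, and expand $\sqrt{1+\varepsilon}=1+O(\varepsilon)$ and $\log(1+\varepsilon)=O(\varepsilon)$. The prefactors of $\Delta$ then cancel in the ratio $q^\Delta_{\lambda,f}/q^\Delta_{\lambda_0,f_0}$, reducing the three scaled integrands pointwise on $\mathbb R\setminus\{0\}$ to $(\sqrt{\lambda f}-\sqrt{\lambda_0 f_0})^2$, $\lambda f\log(\lambda f/\lambda_0 f_0)$ and $\lambda f\log^2(\lambda f/\lambda_0 f_0)$ respectively. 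Integrating, adding the atomic contributions, and invoking the algebraic identities $\lambda\int f\log(\lambda f/\lambda_0 f_0)=\lambda\log(\lambda/\lambda_0)+\lambda\mathrm K(f,f_0)$ and $\lambda\log(\lambda/\lambda_0)+\lambda_0-\lambda=\mathrm K(\lambda,\lambda_0)$ produces the right-hand sides of \eqref{eq:hhdelta}--\eqref{eq:vvdelta}.

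The main obstacle is upgrading the pointwise asymptotics to convergence of integrals. For \eqref{eq:hhdelta} the brute dominating bound $(\sqrt a-\sqrt b)^2\le a+b$ combined with $\int q^\Delta\,\dd\mu=1$ makes the step routine. For \eqref{eq:kkdelta} and \eqref{eq:vvdelta} the logarithm is the delicate ingredient, since $\log(q^\Delta_{\lambda,f}/q^\Delta_{\lambda_0,f_0})$ has to be controlled uniformly in small $\Delta$ before dominated convergence can be applied; the explicit bound on $r^\Delta_{\lambda,f}$, the strict positivity of the mixture densities $f$, $f_0$, and the (implicit) finiteness of the limiting $\mathrm K(\lambda f,\lambda_0 f_0)$ and $\mathrm V(\lambda f,\lambda_0 f_0)$ are what let one produce an integrable majorant, and this dominated-convergence step is the real technical heart of the argument.
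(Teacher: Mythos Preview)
Your approach is correct and essentially the same as the paper's: both rest on the first-order expansion $q^\Delta_{\lambda,f}(x)\approx e^{-\lambda\Delta}\one_{\{0\}}(x)+\lambda\Delta f(x)\one_{\mathbb R\setminus\{0\}}(x)$, split the integral into atom and continuous part, and pass to the limit. The only cosmetic difference is that for \eqref{eq:hhdelta} the paper works with the Hellinger affinity $H(\mathbb Q^\Delta_{\lambda,f},\mathbb Q^\Delta_{\lambda_0,f_0})=\int\sqrt{\dd\mathbb Q^\Delta_{\lambda,f}\,\dd\mathbb Q^\Delta_{\lambda_0,f_0}}$ and then uses $h^2=2-2H$, whereas you expand $(\sqrt{q^\Delta_{\lambda,f}}-\sqrt{q^\Delta_{\lambda_0,f_0}})^2$ directly; the paper also treats the $o(\Delta)$ terms informally and does not spell out the dominated-convergence step you flag.
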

The proof will be presented in Appendix~\ref{app:proof}. 
%{\color{red} The proof is incomplete. We do not use (10) and (11), right? Het leek me niet zo nodig om drie keer vrijwel hetzelfde bewijs op te schrijven. Die andere twee gebruiken voor de $B^\Delta$ bollen.}

\begin{rem}
The Hellinger process (here deterministic) of order $\half$ for \emph{continuous} observations of $X$ on an interval $[0,t]$ is given by~\cite[Sections IV.3 and IV.4a]{js} 
\[
h_t=\frac{t}{2}\int(\sqrt{\lambda f(x)}-\sqrt{\lambda_0f_0(x)})^2\,\dd x=
 h_1t,
\]
 from which it follows that $h^2(\mathbb{R}^t_{\lambda,f},\mathbb{R}^t_{\lambda_0,f_0})=2-2\exp(-h_t)$, whose derivative in $t=0$ is the same as in
\eqref{eq:hhdelta} and thus equal to $2h_1$. 
For the Kullback-Leibler divergence and the discrepancy $\mathrm{V}$ similar assertions hold. These observations have the following heuristic explanation. For $\Delta\to 0$, there is no big difference between observing the path of $X$ over the interval $[0,\Delta]$ and $X_\Delta$, as the probability of $\{N_\Delta\geq 2\}$ is small (of order $\Delta^2$). 
\end{rem}
In order to determine the posterior contraction rate in our problem, we now specify suitable neighbourhoods $A_n$ of $(\lambda_0,f_0),$ for which this will be done.
Let $M>0$ be a constant and let $\{\varepsilon_n\}$ be a sequence of positive numbers, such that  $\varepsilon_n\rightarrow 0$ as $n\rightarrow\infty.$ Let
\begin{equation*}
h^\Delta(\mathbb{Q}_0,\mathbb{Q}_1)=\frac{1}{\sqrt{\Delta}}h(\mathbb{Q}_0,\mathbb{Q}_1)
\end{equation*}
be a rescaled Hellinger distance. Lemma~\ref{lemma:delta} suggests that this is the right scaling to use. Introduce the complements of the Hellinger-type neighbourhoods of $(\lambda_0,f_0),$
\begin{equation*}
A(\varepsilon_n,M)=\{ (\lambda,f) :  h^\Delta( \mathbb{Q}_{\lambda_0,f_0}^{\Delta},\mathbb{Q}_{\lambda,f}^{\Delta} )> M \varepsilon_n \}.
\end{equation*}
We shall say that $\varepsilon_n$ is a posterior contraction rate, if there exists a constant $M>0,$ such that
\begin{equation}
\label{post_cons}
\Pi(A(\varepsilon_n,M)|\mathcal{Z}_n^{\Delta})\rightarrow 0
\end{equation}
in $\mathbb{Q}_{\lambda_0,f_0}^{\Delta, n}$-probability as $n\rightarrow\infty$. 
Our goal in this section is to determine the `fastest' rate at which $\varepsilon_n$ is allowed to tend to zero, while not violating \eqref{post_cons}.

We
will assume that the observations are generated from a compound Poisson process that satisfies the following assumption.
\begin{assump}\label{ass:truth}

\begin{enumerate}[(i)]
\item $\lambda_0$ is in a compact set $[\underline{\lambda}, \overline{\lambda}]\subset(0,\infty)$; 
\item 
The true density $f_0$ is a location mixture of normal densities, i.e.\
\begin{equation*}
f_0(x)=f_{ H_0 , \sigma_0 } (x)=\int \phi_{\sigma_0 }(x-z)\mathrm{d}H_0(z)
\end{equation*}
for some fixed distribution $H_0$ and a constant $\sigma_0 \in [\underline{\sigma}, \overline{\sigma}]\subset(0,\infty)$. Furthermore, for some $0<\kappa_0<\infty,$ $H_0[-\kappa_0,\kappa_0]=1,$ i.e.\ $H_0$ has  compact support.
\end{enumerate}
\end{assump}
The more general location-scale mixtures of normal densities,
\begin{equation*}
f_0(x)=f_{ H_0 , K_0 } (x)=\iint \phi_{\sigma}(x-z)\mathrm{d}H_0(z)\mathrm{d}K_0(\sigma),
\end{equation*}
possess even better approximation properties than the location mixtures of the normals (here $H_0$ and $K_0$ are distributions) and could also be considered in our setup. However, this would lead to additional technical complications, which could obscure essential contributions of our work.

For obtaining posterior contraction rates we need to make some assumptions on the prior. 
\begin{assump}\label{ass:prior}
\mbox{}
\begin{enumerate}[(i)]
\item The prior on $\lambda$, 
$\Pi_1$,  has a density $\pi_1$ (with respect to the Lebesgue measure) that is supported on the finite interval $[\underline{\lambda},\overline{\lambda}]\subset (0,\infty)$ and is such that
\begin{equation}
\label{pi1}
0<\underline{\pi}_1 \leq \pi_1(\lambda)\leq \overline{\pi}_1<\infty, \quad \lambda\in[\underline{\lambda},\overline{\lambda}]
\end{equation}
for some constants $\underline{\pi}_1$ and $\overline{\pi}_1$;

\item The base measure $\alpha$ of the Dirichlet process prior $D_{\alpha}$ has a continuous density on an interval $[-\kappa_0-\zeta,\kappa_0+\zeta]$, with $\kappa_0$ as in Assumption~\ref{ass:truth}~(ii), for some $\zeta>0,$ is bounded away from zero there, and for all $t>0$ satisfies the tail condition
\begin{equation}
\label{tailc}
\alpha(|z|>t)\lesssim e^{-b|t|^{\delta}}
\end{equation}
with some constants $b>0$ and $\delta>0$;

\item The prior on $\sigma$, $\Pi_3$, is supported on the interval $[\underline{\sigma},\overline{\sigma}]\subset(0,\infty)$ and is such that its density $\pi_3$ with respect to the Lebesgue measure satisfies
\begin{equation*}
0<\underline{\pi}_3 \leq \pi_3(\sigma)\leq \overline{\pi}_3<\infty, \quad \sigma\in[\underline{\sigma},\overline{\sigma}]
\end{equation*}
for some constants $\underline{\pi}_3$ and $\overline{\pi}_3.$

\end{enumerate}
\end{assump}
Assumptions~\ref{ass:truth} and~\ref{ass:prior} parallel those given in \cite{ghosal01} in the context of non-parametric Bayesian density estimation using the Dirichlet location mixture of normal densities as a prior. We refer to that paper for an additional discussion.

The following is our main result. Note that it covers both the case of high frequency observations ($\Delta\to 0$) and observations with  fixed intersampling intervals.  We use $\Pi$ to denote the posterior on $(\lambda,f)$.

\begin{thm}
\label{mainthm}
Under Assumptions~\ref{ass:truth} and~\ref{ass:prior}, provided 
%$\Delta=n^{-\alpha}$ for $0\leq\alpha<1,$
$n\Delta\to\infty$,
there exists a constant $M>0,$ such that for
\begin{equation*}
\eps_n=\frac{\log^{\kappa}(n\Delta)}{\sqrt{n\Delta}}, \quad \kappa=\max\left(\frac{2}{\delta},\frac{1}{2}\right)+\frac{1}{2},
\end{equation*}
we have
\begin{equation*}
\Pi\left( A\left(\eps_n,M\right) \middle| \mathcal{Z}_n^{\Delta}\right)\rightarrow 0
\end{equation*}
%or equivalently,
%\begin{equation*}
%\Pi\left( \overline{A}\left(\frac{\log^{\kappa}(n)}{\sqrt{n}},M\right) \middle| \mathcal{Z}_n\right)\rightarrow 0
%\end{equation*}
in $\mathbb{Q}_{\lambda_0,f_0}^{\Delta, n}$-probability as $n\rightarrow\infty.$
\end{thm}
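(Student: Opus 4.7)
My plan is to adapt the classical prior-mass/sieves/tests scheme of \cite{ghosal00,ghosal01} to the present non-i.i.d.\ (in $n$) high-frequency setting, working throughout with the \emph{scaled} pseudo-distances $h^\Delta$, $\mathrm{K}^\Delta$, $\mathrm{V}^\Delta$ and the ``effective sample size'' $n\Delta$ in place of $n$. The guiding heuristic is that by Lemma~\ref{lemma:delta}, $n\, h^2(\mathbb{Q}^\Delta_{\lambda,f},\mathbb{Q}^\Delta_{\lambda_0,f_0}) \asymp n\Delta\,(h^\Delta)^2$, so the choice $\varepsilon_n^2 \asymp \log^{2\kappa}(n\Delta)/(n\Delta)$ produces the exponent $n\Delta\,\varepsilon_n^2 = \log^{2\kappa}(n\Delta)$, which is exactly what the standard Bayes bookkeeping needs. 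Because the sampling law $\mathbb{Q}^{\Delta,n}_{\lambda_0,f_0}$ itself varies with $n$ (through $\Delta=\Delta_n$), Theorem~2.1 of \cite{ghosal00} is not directly applicable as a black box, so I would retrace its three ingredients by hand, tracking the dependence on $\Delta$ explicitly everywhere.

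\emph{Prior mass.} I would first establish
\[
\Pi\bigl\{(\lambda,f):\ \mathrm{K}^\Delta(\mathbb{Q}^\Delta_{\lambda_0,f_0},\mathbb{Q}^\Delta_{\lambda,f}) \le \varepsilon_n^2,\ \mathrm{V}^\Delta(\mathbb{Q}^\Delta_{\lambda_0,f_0},\mathbb{Q}^\Delta_{\lambda,f}) \le \varepsilon_n^2\bigr\} \gtrsim \exp(-c\, n\Delta\,\varepsilon_n^2).
\]
By Lemma~\ref{lemma:delta}, together with a uniform-in-$\Delta$ upper bound obtained from the same calculation, this reduces to a Kullback--Leibler/$\mathrm{V}$-type concentration for $\lambda f$ around $\lambda_0 f_0$ in intensity space. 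The product structure $\Pi=\Pi_1\otimes\Pi_2$ decouples the estimate: the $\lambda$-factor is handled by the two-sided bound~\eqref{pi1} on $\pi_1$, while the $f$-factor is exactly the Dirichlet-mixture prior concentration of~\cite{ghosal01}, which under Assumption~\ref{ass:prior} and the tail bound~\eqref{tailc} yields the log-power $\log^{2/\delta+1}(1/\varepsilon_n)$ matching $2\kappa$.

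\emph{Sieves and entropy.} Define $\mathcal{F}_n$ to consist of pairs $(\lambda,f_{H,\sigma})$ with $\lambda\in[\underline\lambda,\overline\lambda]$, $\sigma\in[\underline\sigma,\overline\sigma]$, and $H([-a_n,a_n]^c)$ small, where $a_n \asymp \log^{1/\delta}(n\Delta)$. The tail assumption~\eqref{tailc} together with a standard Dirichlet-process tail argument then yields $\Pi(\mathcal{F}_n^c) \le \exp(-C\, n\Delta\,\varepsilon_n^2)$ for any preassigned $C$. For the $h^\Delta$-entropy, Lemma~\ref{lemma:delta} once more allows a reduction: $h^\Delta(\mathbb{Q}^\Delta_{\lambda,f},\mathbb{Q}^\Delta_{\lambda',f'})$ is dominated, uniformly in $\Delta$, by $h(\lambda f,\lambda' f')$ up to absolute constants, reducing the covering problem to one for location mixtures of normals on a bounded mixing window, giving $\log N(\varepsilon_n,\mathcal{F}_n,h^\Delta) \lesssim \log^{1+1/\delta}(1/\varepsilon_n) \lesssim n\Delta\,\varepsilon_n^2$.

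\emph{Tests and conclusion.} The main obstacle is the test step, since this is where the changing sampling distribution genuinely matters. For each fixed $n,\Delta$, however, the observations $Z_1^\Delta,\dots,Z_n^\Delta$ are i.i.d.\ from $\mathbb{Q}^\Delta_{\lambda_0,f_0}$, so the Le Cam--Birg\'e Hellinger test against any alternative with $h^\Delta > M\varepsilon_n$ has both error probabilities bounded by $\exp(-\tfrac18 n\,h^2) = \exp(-\tfrac18 n\Delta\,(h^\Delta)^2)$, and Birg\'e's uniform construction extends this to a Hellinger ball around the alternative. Combining such local tests along a $h^\Delta$-covering of $A(\varepsilon_n,M)\cap\mathcal{F}_n$ yields a global test $\phi_n$ with both errors $\le \exp(-c\, n\Delta\,\varepsilon_n^2)$, uniformly in $n$; this is the refinement of the testing machinery of~\cite{ghosal00} that is explicitly tailored to the scaled metric. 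Finally, the standard Bayes identity applied to $\Pi(A(\varepsilon_n,M)\mid\mathcal{Z}_n^\Delta)$, together with a lower bound $\int L_n^\Delta\,d\Pi \gtrsim \exp(-c' n\Delta\,\varepsilon_n^2)$ holding with $\mathbb{Q}^{\Delta,n}_{\lambda_0,f_0}$-probability tending to one (via Chebyshev applied to $\log L_n^\Delta/L_n^\Delta(\lambda_0,f_0)$ using the scaled $\mathrm{V}$-divergence of Lemma~\ref{lemma:delta}) and the sieve complement bound, yields the desired posterior convergence in probability.
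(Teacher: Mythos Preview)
Your overall strategy coincides with the paper's: split the posterior using tests $\phi_n$, build $\phi_n$ from Le Cam--Birg\'e local tests glued along an $h^\Delta$-cover of a sieve, bound the sieve complement under the prior, and lower-bound the denominator via a scaled Kullback--Leibler prior-mass estimate combined with Chebyshev (the paper's version of Lemma~8.1 in \cite{ghosal00}). Working throughout with $h^\Delta,\mathrm K^\Delta,\mathrm V^\Delta$ and the effective sample size $n\Delta$ is exactly the right adaptation.

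The gap is in the quantitative balancing. First, with $a_n\asymp\log^{1/\delta}(n\Delta)$ one has $a_n^\delta\asymp\log(n\Delta)$, so the Dirichlet tail estimate gives only $\Pi(\mathcal F_n^c)\lesssim\eta_n^{-1}e^{-b a_n^\delta}\asymp(n\Delta)^{-b}$, a polynomial, which cannot be $\le\exp(-C\,n\Delta\,\varepsilon_n^2)=\exp(-C\log^{2\kappa}(n\Delta))$ for any $C>0$ once $2\kappa>1$. More seriously, even the correct choice $a_n\asymp\log^{2/\delta}(1/\varepsilon_n)$ (so $a_n^\delta\asymp\log^2(n\Delta)$) yields only $\Pi(\mathcal F_n^c)\lesssim e^{-c\log^2(n\Delta)}$, which for $\delta<4$ is \emph{still} larger than $e^{-C\,n\Delta\,\varepsilon_n^2}$; so your claimed inequality $\Pi(\mathcal F_n^c)\le\exp(-C\,n\Delta\,\varepsilon_n^2)$ cannot hold for any fixed sieve of this type. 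Second, with this $a_n$ the Ghosal--van der Vaart entropy bound for normal location mixtures is $\log N\lesssim\log^{4/\delta+1}(1/\varepsilon_n)$, not $\log^{1+1/\delta}$; this exponent is precisely what forces $2\kappa=4/\delta+1$. Third, the KL prior-mass bound from \cite{ghosal01} has exponent $\log^2(1/\varepsilon)$, not $\log^{2/\delta+1}$ (and $2\kappa$ equals $4/\delta+1$, not $2/\delta+1$).

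The paper fixes the mismatch by a two-scale argument: it uses a \emph{smaller} inner rate $\widetilde\varepsilon_n=\log(n\Delta)/\sqrt{n\Delta}$ for the denominator (Proposition~\ref{prop:c3}), so that both the prior-mass lower bound and the sieve-complement upper bound live at the $\log^2(n\Delta)$ scale and can be matched by choosing $L$ large in $a_n=L\log^{2/\delta}(1/\eta_n)$, while only the entropy (Proposition~\ref{prop:c1}) and the test error exponent need to meet the larger scale $n\Delta\,\varepsilon_n^2=\log^{4/\delta+1}(n\Delta)$. Once you introduce this $\widetilde\varepsilon_n$ and correct the three exponents above, your argument becomes essentially the paper's.
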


For fixed $\Delta$ (w.l.o.g.\ one may then assume $\Delta=1$) the posterior contraction rate in Theorem~\ref{mainthm} reduces to $\eps_n=\frac{\log^{\kappa}(n)}{\sqrt{n}}$.
We also see that the posterior contraction rate  is controlled by the parameter $\delta$ of the tail behaviour in \eqref{tailc}. Note that if \eqref{tailc} is satisfied for some $\delta>4,$ it is also automatically satisfied for all $0<\delta\leq 4$.
The stronger the decay rate in \eqref{tailc}, the better the contraction rate, but all $\delta\geq 4$ give the same value $\kappa=1$. The best possible posterior contraction rate in Theorem~\ref{mainthm} for minimal $\delta$ is obtained for $\delta=4$. In the proof in Section~\ref{sec:proofmainthm} we can therefore assume that $\delta\leq 4$.
  
As on p.~1239 in \cite{ghosal01} and similar Corollary 5.1 there, Theorem~\ref{mainthm} implies existence of a point estimate of $(\lambda_0,f_0)$ with a frequentist convergence rate $\varepsilon_n.$ The (frequentist) minimax convergence rate for estimation of $k_{\lambda,f}^{\Delta}$ relative to the Hellinger distance is unknown in our problem, but an analogy to~\cite{ibragimov82} suggests that up to a logarithmic factor it should be of order $\sqrt{n\Delta}$ (cf.\ \cite{ghosal01}, p.\ 1236). The logarithmic factor is insignificant for all practical purposes. The convergence rate of an estimator of the L\'evy density with loss measured in the $L_2$-metric in a more general L\'evy model  than the CPP model is $(n\Delta)^{-\beta/(2\beta+1)},$ whenever the target density is Sobolev smooth of order $\beta$ (cf.\ \cite{comte11}). Our contraction rate is hence, roughly speaking, a limiting case of the convergence in \cite{comte11} for $\beta\to\infty.$

%%%%%%%%%%%%%%%%
\section{Algorithms for drawing from the posterior}\label{section:algo}

In this section we discuss computational methods for drawing from the distribution of the pair $(\la, f)$, conditional on $\scr{X}_n^\Delta$ (or equivalently: conditional on $\scr{Z}_n^\Delta$).  In the following there is no specific need that the observational times are equidistant.
We will assume observations at times $0<t_1<\cdots<t_n$ and set $\Delta_i =t_i-t_{i-1}$ ($1\le i \le n$). Further, for consistency with notation following shortly, we set $z_i=X_{t_i}-X_{t_{i-1}}$ and $z=(z_1,\ldots, z_n)$. We will use ``Bayesian notation'' throughout and write $p$ for a probability density of mass function and use $\pi$ similarly for a prior density or mass function. 

In general, it is infeasible to generate independent realisations of the posterior distribution of  $(\la,f)$.  To see this: from \eqref{density} one obtains that the conditional density of a nonzero increment $z$ on a time interval of length $\Delta$ is given by
\begin{equation}
\label{eq:z} 
p(z\mid \la, f) = \frac{e^{-\la\Delta}}{1-e^{-\la\Delta}}  \sum_{k=1}^\infty \frac{(\la\Delta)^k}{k!} f^{*k}(z), \end{equation}
which generally is rather intractable due to the infinite weighted sum of convolutions. We specialise to the case where the jump size distribution is a mixture of $J\ge 1$ Gaussians. The richness and versatility of the class of finite normal mixtures is convincingly demonstrated in \cite{mw1992}.

Hence, we assume
\begin{equation}
\label{eq:mixture} f(\cdot)=\sum_{j=1}^J \rho_j \phi(\cdot; \mu_j, 1/\tau), \quad \sum_{j=1}^J \rho_j=1,
\end{equation}
where $\phi(\cdot; \mu, \si^2)$ denotes the density of a random variable with $\mathcal{N}(\mu, \si^2)$ distribution. Note that in \eqref{eq:mixture} we parametrise the density with the \emph{precision} $\tau$. 
In the ``simple'' case $J=2$ the convolution density of $k$ independent jumps is given by 
\[  f^{*k}(\cdot) = \sum_{\ell=0}^k {k \choose \ell} \rho_1^\ell \rho_2^{k-\ell} \phi(\cdot; \ell\mu_1 +(k-\ell) \mu_2; k/\tau). \] 
Plugging this expression into equation \eqref{eq:z} confirms the intractable form of $p(z\mid \la, f)$. 

We will introduce auxiliary variables to circumvent the intractable form of the likelihood.  In case the CPP is observed {\it continuously}, the problem is much easier as now the continuous time likelihood on an interval $[0,T]$ is known to be (\cite{shreve}, Theorem 11.6.7)
\begin{equation*}\label{eq:likelihood}	\la^{|V|} e^{-\la T} \prod_{i \in V} f(J_i), \end{equation*}
where the $T_i$ are the jump times of the CPP, $J_i$ the corresponding jump sizes and $V=\{i:\, T_i \le T\}$. 
The tractability of the continuous time likelihood naturally suggests the construction of a data augmentation scheme. Denote the values of the CPP in between times $t_{i-1}$ and $t_i$ by $x_{(i-1,i)}$. We will refer to $x_{(i-1,i)}$ as the missing values on the $i$-th segment. Set 
\[ x^{mis} = \{x_{(i-1,i)},\: 1\le i \le n\}.\]
A data augmentation scheme now consists of augmenting auxiliary variables $x^{mis}$ to $(\la,f)$ and constructing a Markov chain that has $p(x^{mis}, \la, f \mid z)$ as invariant distribution. More specifically, a standard implementation of this algorithm consists of the following steps:
\begin{enumerate}
\item[1.] Initialise $x^{mis}$.
\item[2.] Draw $(\la, f) \mid (x^{mis}, z)$.
\item[3.] Draw $x^{mis} \mid (\la, f, z)$. 
\item[4.] Repeat steps 2 and 3 many times.
\end{enumerate}
Under weak conditions, the iterates for $(\la,f)$ are (dependent) draws from the posterior distribution. 
Step 3 entails generating compound Poisson bridges. By the Markov property, bridges on different segments can be drawn independently. Data augmentation has been used in many Bayesian computational problems, see e.g.\ \cite{tanner-wong}.  The outlined scheme can be applied to the problem at hand, but we explain shortly that imputation of complete CPP-bridges (which is nontrivial) is unnecessary and we can do with less imputation, thereby effectively reducing the state space of the Markov chain.  

As we  assume that the jumps are drawn from a non-atomic distribution, imputation is only necessary on segments with nonzero increments. For this reason we let 
\[	\scr{I} = \{i \in \{1,\ldots, n\}:\: z_i \neq 0\}  \]
 denote the set of observations with nonzero jump sizes and define the number of segments with nonzero jumps to be $I =|\scr{I}|$. 

%%%%%%%
\subsection{Auxiliary variables}

 Note that if $Y \sim f$ with $f$ as in \eqref{eq:mixture}, then $Y$ can be simulated by first drawing its label $L$, which equals $j$ with probability $\rho_j$, and next drawing from the $N(\mu_{L}, 1/\tau)$ distribution. Knowing the labels, sampling the jumps conditional on their sum being $z$ is much easier compared to the case with unknown labels.  Adding auxiliary variables as labels is  a standard trick used for inference in mixture models (see e.g.\ \cite{diebolt-robert}, \cite{richardsen-green}. For the problem at hand, we can do with even less imputation:  all we need to know is the number of jumps of each type on every segment with nonzero jump size. 
For $i \in \scr{I}$ and $j \in \{1,\ldots, J\}$, let $n_{ij}$ denote the number of jumps of type $j$ on segment $i$.  Denote the set of all auxiliary variables by $\ba=\{a_i, \: i \in I\}$, where
\[ a_i=(n_{i1},n_{i2}, \ldots, n_{iJ}). \]
In the following we will use the following additional notation: for $i=1,\ldots,n$, $j=1,\ldots, J$ we set
\[ n_i = \sum_{j=1}^J n_{ij} \qquad s_j = \sum_{i=1}^n n_{ij} \qquad s= \sum_{j=1}^J s_j\]
These are the number of jumps on the $i$-th segment, the total number of jumps of type $j$ (summed over all segments) and the total number of jumps of all types respectively.

%----
\subsection{Reparametrisation and prior specification}
Instead of parametrising with $(\la, \rho_1,\ldots, \rho_J)$, we define
\[\psi_j = \la \rho_j, \qquad j=1,\ldots, J.\] Then 
\[\la = \sum_{j=1}^J \psi_j,\quad \rho_j=\frac{\psi_j}{\sum_{j=1}^J \psi_j}.\] 
The background of this reparametrisation is the obervation that a compound Poisson random variable $Z$ whose jumps are of $J$ types can be decomposed as $Z=\sum_{j=1}^JZ_j$, where the $Z_j$ are independent, compound Poisson random variables whose jumps are of type $j$ only, and where the parameter of the Poisson random variable is $\psi_j$. In what follows we use $\th=(\psi,\mu, \tau)$ with $\psi=(\psi_1,\ldots, \psi_J)$ and $\mu=(\mu_1, \ldots, \mu_J)$.

Denote the Gamma distribution with shape parameter $\alpha$ and rate $\beta$ by $\mathcal{G}(\alpha,\beta)$. We take priors
\begin{align*}
\psi_1,\ldots, \psi_J \quad &\stackrel{\text{iid}}{\sim} \quad \mathcal{G}(\alpha_0, \beta_0) \\ 
\mu \mid \tau \quad &\sim \quad   \mathcal{N}([\xi_1,\ldots, \xi_J]', I_{J\times J}(\tau\kappa)^{-1}) \\
 \tau \quad & \sim \quad \mathcal{G}(\alpha_1,\beta_1)
\end{align*}
with positive hyperparameters $(\alpha_0, \beta_0, \alpha_1, \beta_1,  \kappa)$ fixed. 

%----
\subsection{Hierarchical model and data augmentation scheme}
We construct a Metropolis-Hastings algorithm to draw from 
\[ p(\th, \ba \mid z)  =\frac{p(\th, z, \ba)}{p(z)}. \]
%where 
%\[	\th=(\psi, \mu, \tau), \qquad \psi=(\psi_1,\ldots, \psi_J),\qquad  . 	\]
For an index $i\in\scr{I}$ we set $\ba_{-i} = \{ a_j,\: j \in \scr{I}\setminus \{i\}\}$. The two main steps of the algorithm are:
\begin{enumerate}
\item {\it Update segments:} for each segment $i \in \scr{I}$, draw $a_i$ conditional on $(\th, z, \ba_{-i})$;
\item {\it Update parameters:} draw $\th$ conditional on $(z, \ba)$. 
\end{enumerate}
Compared to the full data augmentation scheme discussed previously, the present approach is computationally much cheaper as the amount of imputation scales with the number of segments that need imputation.
 If the time in between observations is fixed and equal to $\Delta$, then the expected number of segments for imputation equals $n \left(1- e^{-\la \Delta}\right)$, which is for small $\Delta$ approximately proportional to $n\Delta\lambda$. 

%For simulating  a realisation of the CPP, it is useful to write our model as a hierarchical model
%\begin{align}\label{eq:hierar}
% z_i \mid a_i, \mu, \tau  \quad\stackrel{\text{ind}}{\sim}  \quad & N(a_i'\mu, n_i/\tau)  \nonumber\\
%(n_{i1},\ldots, n_{iJ}) \mid n_i, \psi \quad \stackrel{\text{ind}}{\sim}  \quad & \begin{cases}  \text{MultNom}(n_i, \psi_1/\la, \ldots, \psi_J/\la)  &\qquad \text{if} \quad n_i>0 \\  (0,\ldots, 0) & \qquad \text{if} \quad n_i=0 \end{cases} \\
%n_i \mid \psi  \quad \stackrel{\text{ind}}{\sim} \quad & \text{Pois}(\la \Delta_i)\nonumber \\
%(\psi, \mu, \tau)  \quad \sim \quad & \pi(\psi, \mu, \tau).\nonumber
%\end{align}
%Here $i \in \{1,\ldots, n\}$. 
%As a result, we obtain 
%\[ p(\th, z, \ba)= \pi(\th) \times \prod_{i=1}^n e^{-\la \Delta_i} \frac{(\la\Delta_i)^{n_i}}{n_i!} \times
%\prod_{i \in \scr{I}} \left(n_i! \prod_{j=1}^J \frac{(\psi_j/\la)^{n_{ij}}}{n_{ij}!} \right) \times \left(\prod_{i=1}^n  \phi(z_i ; a_i' \mu , n_i/\tau)\right).
%\]
%This expression can be simplified to 
%\[ p(\th, z, \ba)= \pi(\th) \times \prod_{j=1}^J e^{-\psi_j T}\times  \prod_{i=1}^n\left( \prod_{j=1}^J \frac{(\psi_j\Delta_i)^{n_{ij}}}{n_{ij}!} \phi(z_i ; a_i' \mu , n_i/\tau)\right).\]

%Define the Normal distribution with mean $\mu$ and variance $0$ to be the Dirac measure at $\mu$. 
Denote the Poisson distribution with mean $\la$ by $\mathcal{P}(\la)$.  Including the auxiliary variables, we can write the observation model as a {\it hierarchical model}
\begin{align}\label{eq:hierar2}
 z_i \mid a_i, \mu, \tau  \quad \stackrel{\text{ind}}{\sim}  \quad &  N(a_i'\mu, n_i/\tau)\nonumber \\
n_{ij} \mid \psi \quad  \stackrel{\text{ind}}{\sim} \quad &  \mathcal{P}(\psi_j\Delta_i) \\
(\psi, \mu, \tau)  \quad \sim \quad & \pi(\psi, \mu, \tau)\nonumber
\end{align}
(with  $i\in \{1,\ldots, n\}$ and $j\in \{1,\ldots, J\}$). This implies
\[ p(\th, z, \ba)= \pi(\th) \times   \prod_{i=1}^n\left( \phi(z_i ; a_i' \mu , n_i/\tau) \prod_{j=1}^J e^{-\psi_j\Delta_i} \frac{(\psi_j\Delta_i)^{n_{ij}}}{n_{ij}!}\right).\]

%---

%-----
\subsection{Updating  segments}\label{subsec:updating-segments}
Updating the $i$-th segment requires drawing from 
\[	p(a_i \mid \th, z, \ba_{-i}) \propto \phi(z_i ; a_i' \mu , n_i/\tau) \prod_{j=1}^J \frac{(\psi_j\Delta_i)^{n_{ij}}}{n_{ij}!}. 
\]
We do this with a Metropolis-Hastings step.   First we draw a proposal $n_i^\circ$ (for $n_i$) from a $\mathcal{P}(\la \Delta_i$) distribution, conditioned to have nonzero outcome.  Next, we draw 
\[	a_i^\circ= (n_{i1}^\circ,\ldots, n^\circ_{iJ}) \sim \mathcal{MN}(n_i^\circ; \psi_1/\la,\ldots, \psi_J/\la), \] 
where $\mathcal{MN}$ denotes the multinomial distribution. 
Hence the proposal density equals
\begin{align*} q(n_{i1}^\circ,\ldots, n^\circ_{iJ} \mid \th) &= \frac{e^{-\la \Delta_i}}{1-e^{-\la \Delta_i}} \frac{(\la \Delta_i)^{n_i^\circ}}{n_i^\circ !} {n^\circ_i \choose n^\circ_{i1} \cdots n^\circ_{iJ}}
\prod_{j=1}^J (\psi_j/\la)^{n^\circ_{ij}}\\
& =\frac{e^{-\la \Delta_i}}{1-e^{-\la \Delta_i}}  \prod_{j=1}^J \frac{(\psi_j\Delta_i)^{n_{ij}^\circ}}{n^\circ_{ij}!}.
\end{align*}
The acceptance probability for the proposal $n^\circ$ equals $1\wedge A$, with 
\[ A= \frac{\phi(z_i ; (a_i^\circ)' \mu , n^\circ_i/\tau)}{\phi(z_i ; a_i' \mu , n_i/\tau) }. \]
%Different segments are updated independently, allowing for parallelisation. Variants of this proposal mechanism can be used equally easy. One may for example first generate 
%\[ \rho^\circ | \rho  \sim \mbox{Dirichlet}(1+\nu \rho_1,\ldots, 1+\nu \rho_J) \]
%followed by generating $a_i^\circ$ from a  $\mbox{MultNom}(n_i^\circ; \rho^\circ_1,\ldots, \rho^\circ_J)$ distribution. Computation of the corresponding acceptance probability is straightforward. The parameter $\nu$ is a tuning parameter that essentially controls the variance of the distribution of $\rho^\circ$ conditional on $\rho$. 

\subsection{Updating parameters}\label{sec:upd_pars}
The proof of the following lemma is given in Appendix~\ref{proof2}. 
\begin{lemma}
\label{lem:postpars}
Conditional on $\ba$, $\psi_1,\ldots \psi_J$ are independent and 
	\[ \psi_j \mid \ba \sim \mathcal{G}(\alpha_0+s_j, \beta_0+T).\]  
Furthermore, 
\begin{equation}\label{eq:update-mutau}
\begin{split}
\mu \mid \tau, z, \ba &\sim \mathcal{N}\left(P^{-1}q, \tau^{-1} P^{-1}\right), \\
\tau \mid z, \ba &\sim \mathcal{G}(\alpha_1+I/2, \beta_1+(R-q' P^{-1} q)/2)),
\end{split}
\end{equation}
where $P$ is the symmetric $J\times J$ matrix with elements
\begin{equation}
\label{eq:P} P= \kappa I_{J\times J} + \tilde{P} \qquad \tilde{P}_{j,k} = \sum_{i\in \scr{I}} n_i^{-1}n_{ij} n_{ik}, \quad j,k \in \{1,\ldots, J\},
\end{equation}
$q$ is the $J$-dimensional vector with 
\begin{equation}
\label{eq:q}
 q_j=\kappa \xi_j +\sum_{i \in \scr{I}} n_i^{-1} n_{ij} z_i,
\end{equation}
$R>0$ is given by 
\begin{equation}
\label{eq:R}
 R= \kappa \sum_{j=1}^J \xi_j^2 + \sum_{i \in \scr{I}} n_i^{-1} z_i^2, 
\end{equation}
and $R-q' P^{-1} q >0$.
\end{lemma}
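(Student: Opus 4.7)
The plan is straightforward: starting from the hierarchical model~\eqref{eq:hierar2} with the stated priors, write out the joint density of $(\theta,z,\mathbf{a})$ and observe that it factors so that inference on $\psi$ separates from inference on $(\mu,\tau)$. Explicitly, the joint density is proportional to
$$\pi(\psi)\,\pi(\mu,\tau)\prod_{i,j}e^{-\psi_j\Delta_i}\frac{(\psi_j\Delta_i)^{n_{ij}}}{n_{ij}!}\prod_{i\in\mathcal{I}}\phi(z_i;a_i'\mu,n_i/\tau),$$
the first product depending only on $(\psi,\mathbf{a})$ and the second only on $(\mu,\tau,z,\mathbf{a})$. In particular, given $\mathbf{a}$ the observations $z$ carry no information about $\psi$, which explains why the stated conditional for $\psi$ does not condition on $z$.

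For the intensities, combining the $\psi_j$-dependent factors with the $\mathcal{G}(\alpha_0,\beta_0)$ prior yields, as a function of $\psi_j$, an expression proportional to $\psi_j^{\alpha_0+s_j-1}\exp(-(\beta_0+T)\psi_j)$ with $T=\sum_i\Delta_i$ and $s_j=\sum_i n_{ij}$. This is the familiar Gamma-Poisson conjugate update, immediately delivering posterior independence of the $\psi_j$'s and the claimed $\mathcal{G}(\alpha_0+s_j,\beta_0+T)$ distributions.

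For $(\mu,\tau)$, the $\mu$-quadratic in the exponent, namely $\kappa(\mu-\xi)'(\mu-\xi)+\sum_{i\in\mathcal{I}}n_i^{-1}(z_i-a_i'\mu)^2$, expands into $\mu'P\mu-2q'\mu+R$ with $P,q,R$ as in~\eqref{eq:P}--\eqref{eq:R}; here the identity $(a_ia_i')_{jk}=n_{ij}n_{ik}$ matches the entries of $\tilde{P}$. Completing the square rewrites it as $(\mu-P^{-1}q)'P(\mu-P^{-1}q)+(R-q'P^{-1}q)$. Collecting the $\tau$-powers $\tau^{I/2}$ from the Gaussian likelihood, $\tau^{J/2}$ from the $\mu$-prior, and $\tau^{\alpha_1-1}$ from the $\tau$-prior, one obtains a Normal-Gamma kernel: conditional on $\tau$, $\mu$ is $\mathcal{N}(P^{-1}q,\tau^{-1}P^{-1})$; integrating $\mu$ out contributes $\tau^{-J/2}$ from the Gaussian normalising constant, leaving $\tau^{\alpha_1+I/2-1}\exp(-\tau[\beta_1+(R-q'P^{-1}q)/2])$ as the marginal density of $\tau$, which is the asserted $\mathcal{G}(\alpha_1+I/2,\beta_1+(R-q'P^{-1}q)/2)$.

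The only step that is not pure bookkeeping is the strict positivity of $R-q'P^{-1}q$, which is needed for the rate parameter of the Gamma distribution of $\tau$ to be well-defined as a strictly positive quantity. By the completion of squares this quantity equals $\min_\mu\bigl[\kappa\|\mu-\xi\|^2+\sum_{i\in\mathcal{I}}n_i^{-1}(z_i-a_i'\mu)^2\bigr]$, and it can vanish only when $\mu=\xi$ simultaneously satisfies $a_i'\xi=z_i$ for every $i\in\mathcal{I}$; since the $z_i$'s have continuous conditional densities under the data-generating CPP model, this is a null event and positivity holds almost surely.
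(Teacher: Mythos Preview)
Your argument is correct and follows essentially the same route as the paper: factor the joint density to separate $\psi$ from $(\mu,\tau)$, read off the Gamma--Poisson update for each $\psi_j$, expand the exponent into $\mu'P\mu-2q'\mu+R$, complete the square, and integrate out $\mu$ to obtain the Normal--Gamma structure.

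The only point of divergence is the positivity of $R-q'P^{-1}q$. The paper observes that this quantity equals $D(P^{-1}q)$ with $D(\mu)=\kappa\|\mu-\xi\|^2+\sum_{i\in\mathcal I}n_i^{-1}(z_i-a_i'\mu)^2\geq 0$, which yields nonnegativity immediately and deterministically. You instead argue for \emph{strict} positivity almost surely, noting that equality forces $a_i'\xi=z_i$ for every $i\in\mathcal I$, a null event under the continuous law of the $z_i$. Your version matches the strict inequality asserted in the lemma; the paper's simpler deterministic bound gives only $\geq 0$.
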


\begin{rem}
If for some $j\in \{1,\ldots, J\}$ we have $s_j=0$ (no jumps of type $j$), then the matrix $\tilde{P}$ is singular. However, adding $\kappa I_{J\times J}$ ensures invertibility of $P$. 
\end{rem}

\subsection{Numerical illustrations}

The first two examples concern mixtures of two normal distributions
We simulated $n=5.000$ segments with $\Delta=1$,  $\mu_1=2$, $\mu_2=-1$ and $\tau=1$. For the prior-hyperparameters we took $\alpha_0=\beta_0=\alpha_1=\beta_1=1$, $\xi_1=\xi_2=0$ and $\kappa=1$. 

The results for $\lambda\Delta=1$, $\rho_1=0.8$, $\rho_2=0.2$ and hence $\psi_1=0.8$ and  $\psi_2=0.2$ are shown in Figure~\ref{fig:la=1}. The densities obtained from the posterior mean of the parameter estimates and the true density are shown in Figure~\ref{fig:la=1_dens}. The average acceptance probability for updating the segments was $51\%$. 

The results for $\lambda\Delta=3$, $\rho_1=0.8$, $\rho_2=0.2$ and hence $\psi_1=2.4$ and $\psi_2=0.6$ are shown in Figure~\ref{fig:la=3}. The densities obtained from the posterior mean of the parameter estimates and the true density are shown in Figure~\ref{fig:la=3_dens}. The average acceptance probability for updating the segments was $41\%$. Observe that the autocorrelation functions of the iterations of the $\psi_i$ in the second case display a much slower decay.

%\medskip

We also assessed the performance of our method on a more complicated example where we took a mixture of four normals. Here $\Delta=1$, $(\mu_1, \mu_2, \mu_3, \mu_4)=(-1, 0, 0.8, 2)$, $(\psi_1,\psi_2, \psi_3, \psi_4)=(0.3, 0.4, 0.2, 0.1)$ (hence $\la=1$) and $\tau^{-1}=0.09$. The results obtained after simulating $n=10.000$ segments are  shown in Figures~\ref{fig:multimodal-trace} and~\ref{fig:multimodal-density}. 

Mixtures of normals need not be multimodal and can also yield skew densities. As an example, we consider the case where $(\mu_1,\mu_2)=(0,2)$, $(\psi_1, \psi_2)=(1.5,0.5)$ (hence $\la=2$) and $\tau=1$. Data were generated and discretely sampled with $\Delta=1$ and $n=5.000$ segments. A plot of the posterior mean is shown in Figure~\ref{fig:skew-density}. 

\begin{figure}
\includegraphics[width=0.95\textwidth]{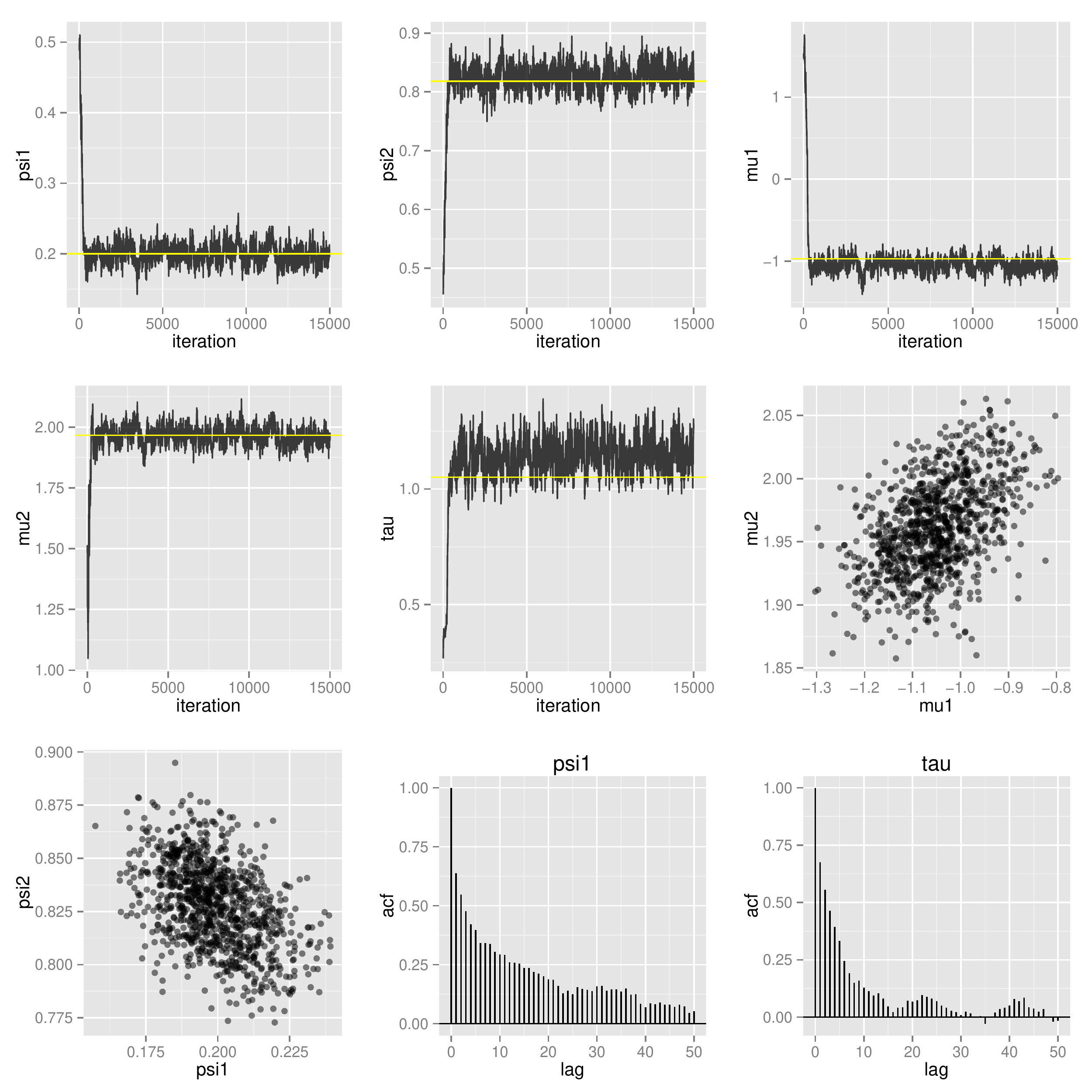}
\caption{Results for $\la=1$ using 15.000 MCMC iterations. 
The trace plots show all iterations; in the other plots the first 5.000 iterations are treated as burnin. 
The figures are obtained after subsampling the iterates, where only each 5th iterate was saved.
The horizontal yellow lines are obtained from computing the posterior mean  of $\th$ based on the true auxiliary variables on all segments.}
\label{fig:la=1}
\end{figure}

\begin{figure}
\includegraphics[scale=0.6]{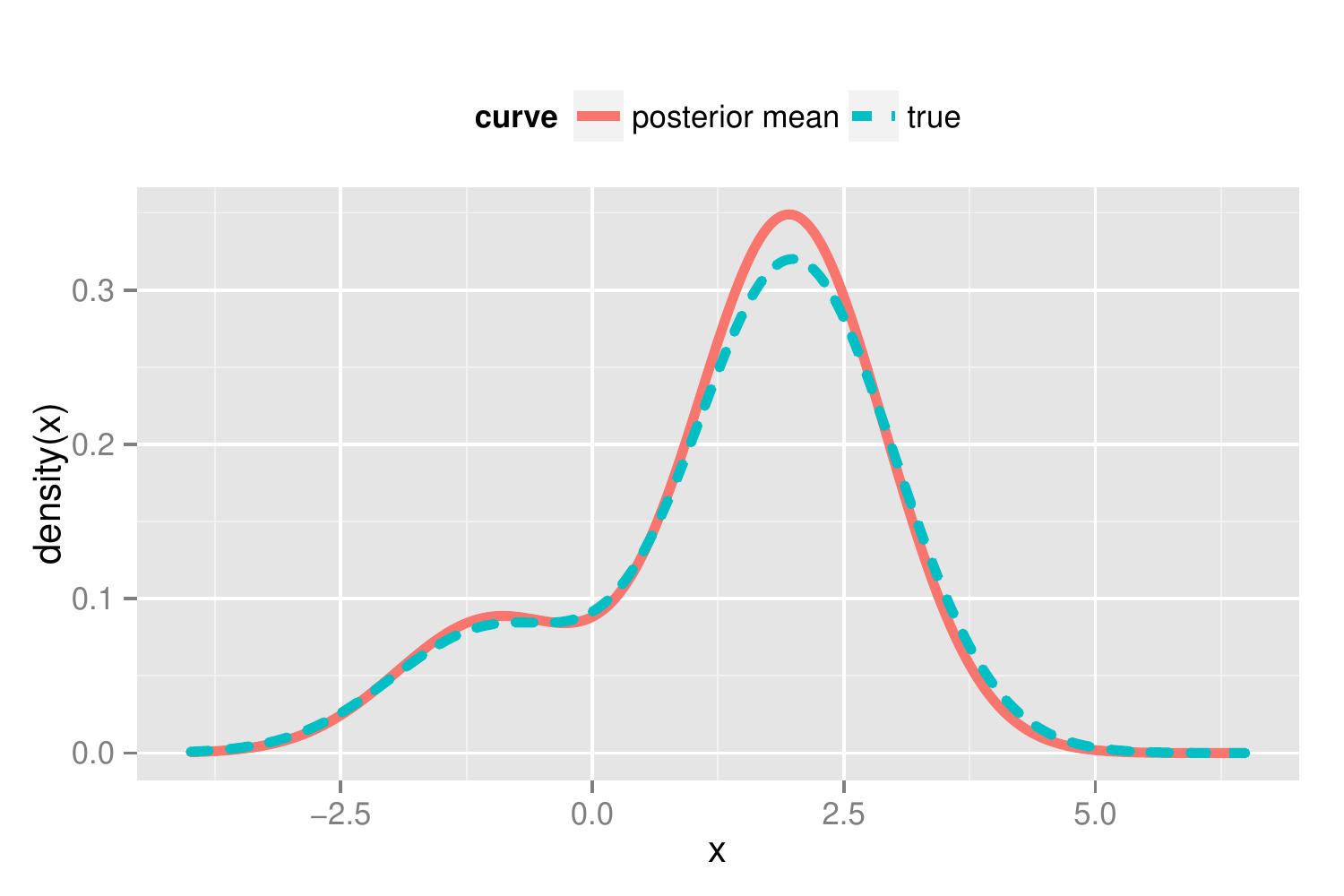}
\caption{Results for $\la=1$;  the first 5.000 iterations are treated as burnin. Shown are the true jump size density and the density obtained from the posterior mean of the non-burnin iterates.}\label{fig:la=1_dens}
\end{figure}

%------

\begin{figure}
\includegraphics[width=0.95\textwidth]{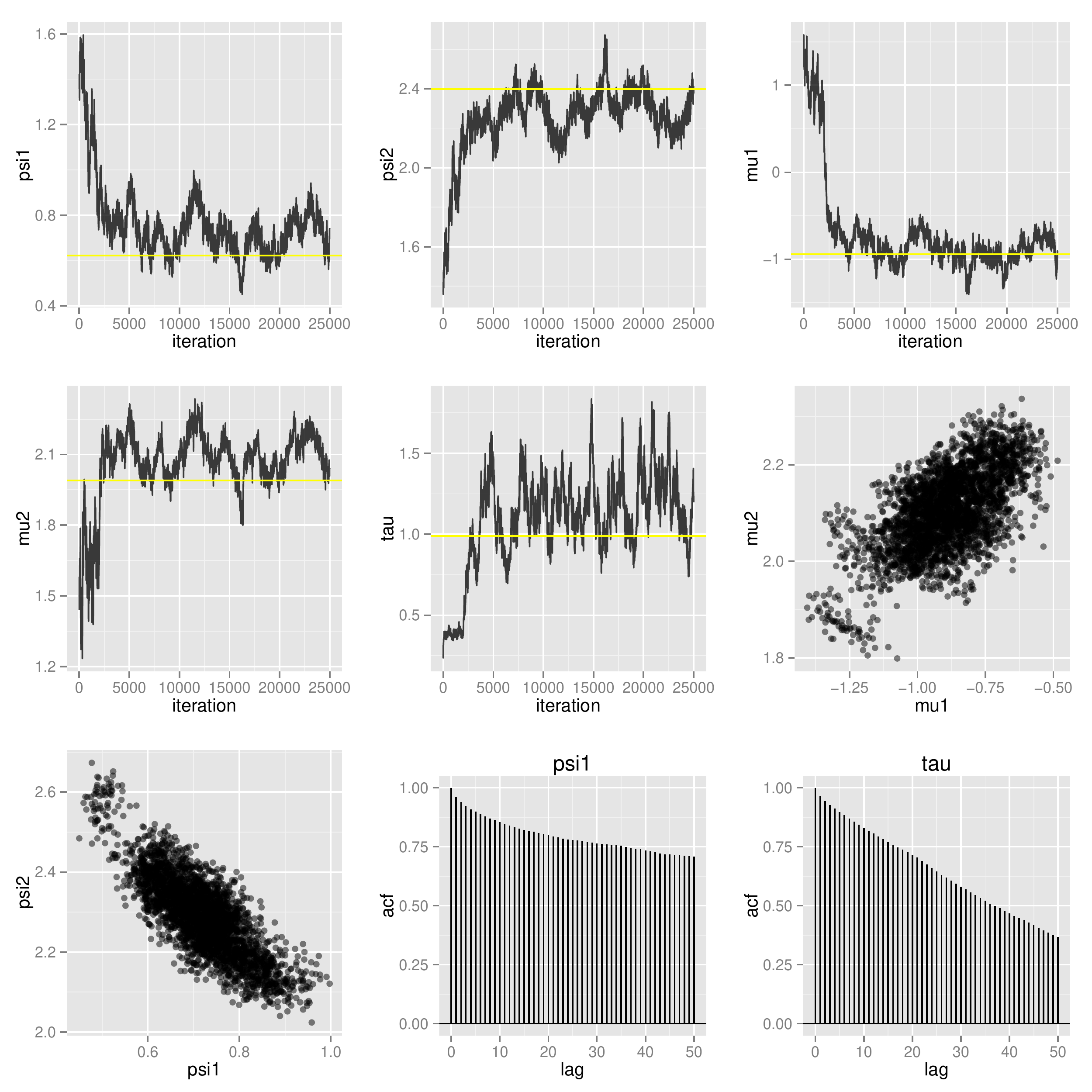}
\caption{Results for $\la=3$ using 25.000 MCMC iterations. 
The trace plots show all iterations; in the other plots the first 10.000 iterations are treated as burnin. 
The figures are  obtained after subsampling the iterates, where only each 5th iterate was saved.
The horizontal yellow lines are obtained from computing the posterior mean  of $\th$ based on the true auxiliary variables on all segments.}
\label{fig:la=3}
\end{figure}

\begin{figure}
\includegraphics[scale=0.6]{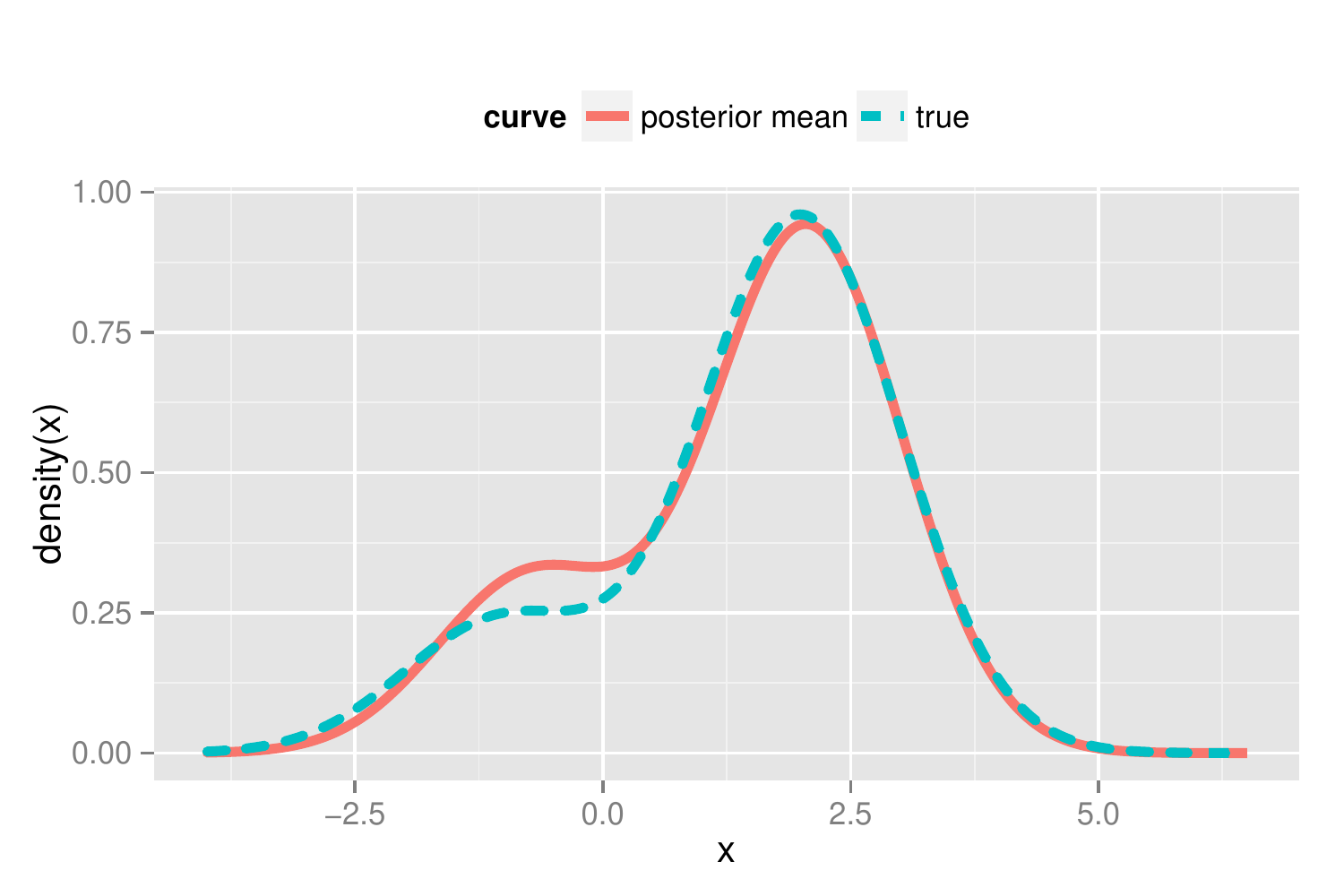}
\caption{Results for $\la=3$;  the first 10.000 iterations are treated as burnin. Shown are the true jump size density and the density obtained from the posterior mean of the non-burnin iterates.
}\label{fig:la=3_dens}
\end{figure}

%------

\begin{figure}
\includegraphics[width=0.95\textwidth]{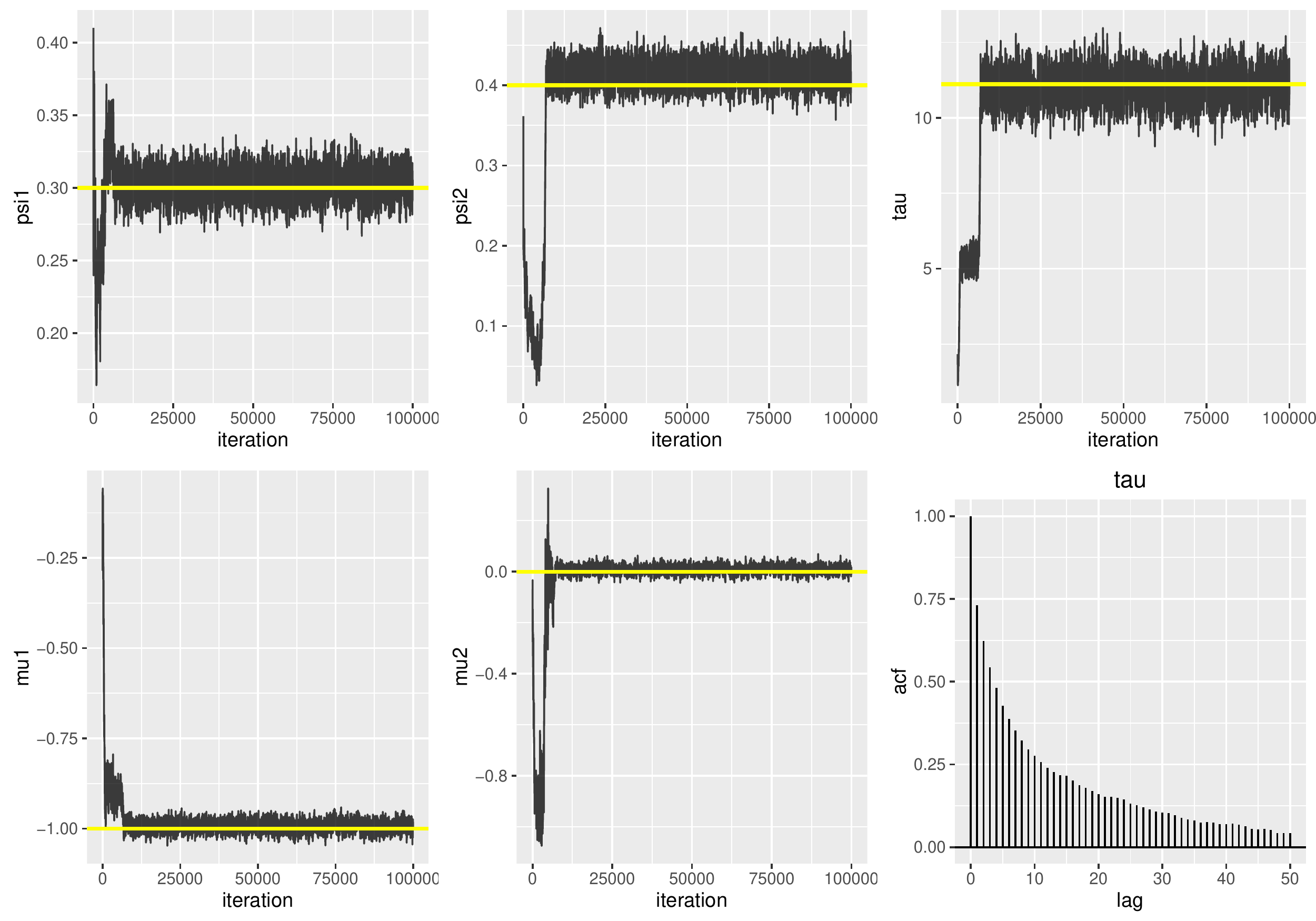}
\caption{Results for the example with a mixture of four normals using 100.000 MCMC iterations. 
The trace plots show all iterations; in the autocorrelation plot the first 20.000 iterations are treated as burnin. 
The figures are  obtained after subsampling the iterates, where only each 5th iterate was saved.
The horizontal yellow lines indicate true values. The results for the other parameters are similar and therefore not displayed.
}
\label{fig:multimodal-trace}
\end{figure}

\begin{figure}
\includegraphics[scale=0.6]{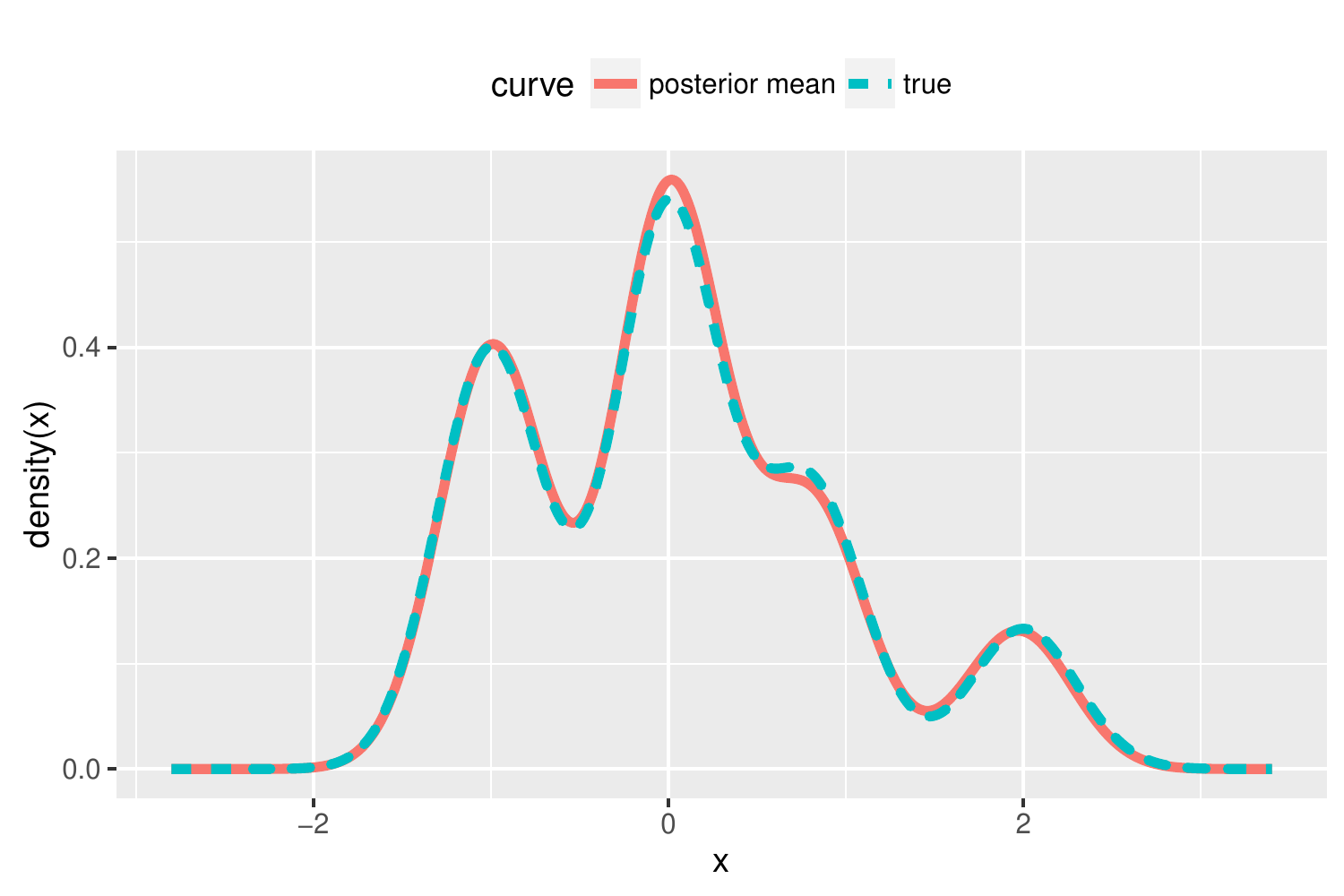}
\caption{Results for the example with a mixture of four normals;  the first 20.000 iterations are treated as burnin. Shown are the true jump size density and the density obtained from the posterior mean of the non-burnin iterates.
}\label{fig:multimodal-density}
\end{figure}

%----------

\begin{figure}
\includegraphics[scale=0.6]{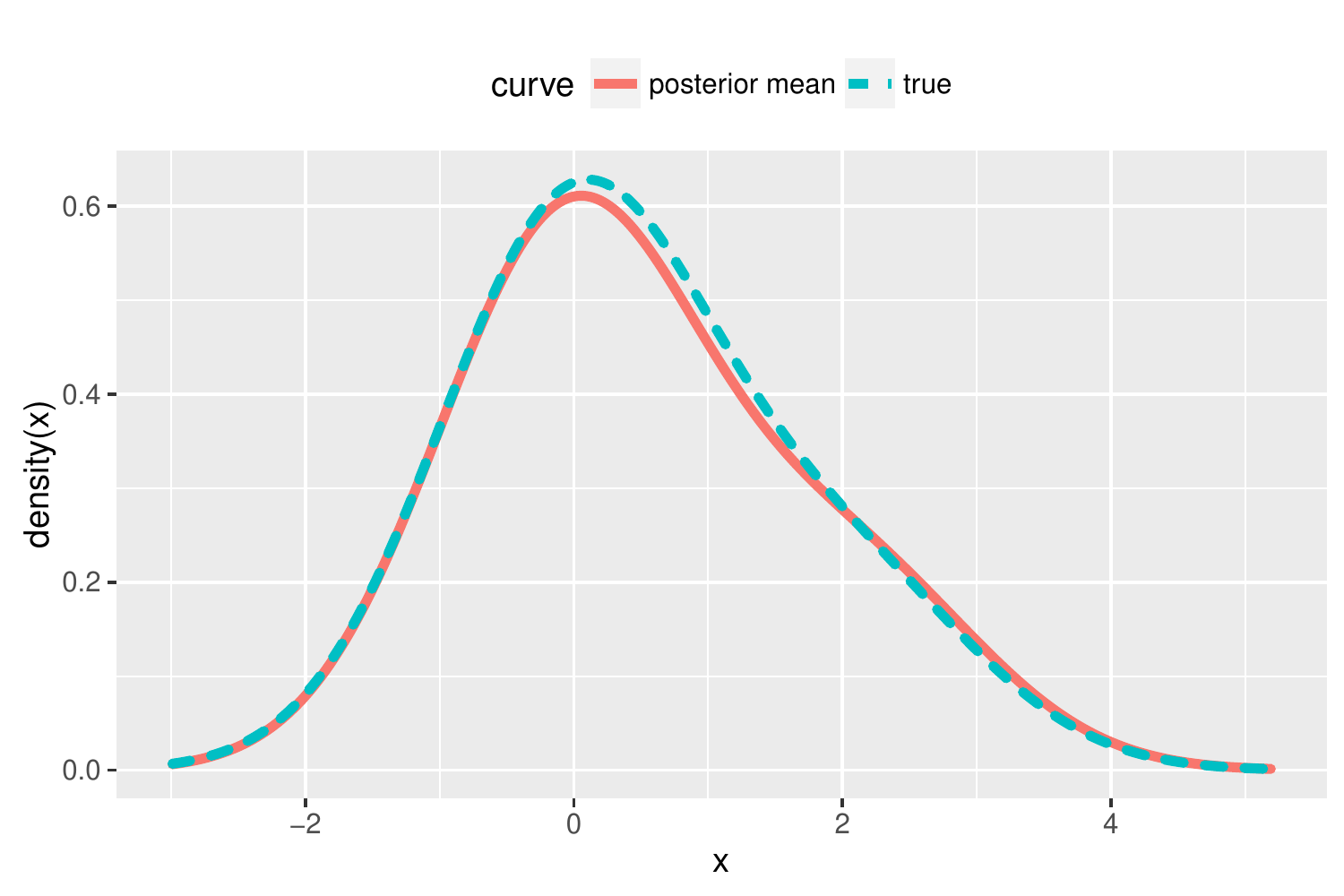}
\caption{Results for the example with a skew density;  the first 20.000 iterations are treated as burnin. Shown are the true jump size density and the density obtained from the posterior mean of the non-burnin iterates.
}\label{fig:skew-density}
\end{figure}

%%%%%%%%%%%%%%%%%%%%

\subsection{Discussion}
As can be seen from the autocorrelation plots, mixing of the chain deteriorates when $\la\Delta$ increases. As the focus in this article is on high frequency data, where there are on average only a few jumps in between observations, we do not go into details on improving the algorithm. We remark that a non-centred parametrisation (see for instance \cite{papas}) may give more satisfactory results when $\la\Delta$ is large.  A non centred parametrisation can be obtained by changing the hierarchical model in \eqref{eq:hierar2}. Denote by $F^{-1}_\la$ the inverse cumulative distribution function of the $\mathcal{P}(\la)$ distribution. Let $u_{ij}$ ($i=1,\ldots, n$ and $j=1,\ldots J$) be a sequence of independent $U(0,1)$ random variables and set $u=\{u_{ij},\, i=1,\ldots, n,\, j=1,\ldots J\}$. By considering the hierarchical model
\begin{align}\label{eq:hierar3}
 z_i \mid u, \mu, \tau  \quad \stackrel{\text{ind}}{\sim}  \quad &  N\left(\sum_{j=1}^J \mu_j F^{-1}_{\psi_j \Delta_i}(u_{ij}), \tau^{-1}\sum_{j=1}^J F^{-1}_{\psi_j \Delta_i}(u_{ij}) \right)\nonumber \\
u_{ij}\quad  \stackrel{\text{iid}}{\sim} \quad &  U(0,1) \\
(\psi, \mu, \tau)  \quad \sim \quad & \pi(\psi, \mu, \tau)\nonumber
\end{align}
($i\in \{1,\ldots, n\}$ and $j\in \{1,\ldots, J\}$), $\psi$ can be updated using  a Metropolis-Hastings step. In this way $\{n_{ij}\}$ and $\psi$ are updated simultaneously. 

Another option is to integrate out $(\mu,\tau)$ from $p(\th,z,\ba)$. In this model it is even possible to  integrate out $\psi$ as well. In that case only the auxiliary variables $\ba$ have to be updated. Yet another method to improve the efficiency of the algorithm is to use ideas from parallel tempering (cf.\ Chapter 11 in \cite{handbook-mcmc}).

%%%%%%%%%\mathrm{I}_n
\section{Proof of Theorem~\ref{mainthm}}
\label{sec:proofmainthm}

There are a number of general results in Bayesian nonparametric statistics, such as the fundamental Theorem 2.1 in \cite{ghosal00} and Theorem 2.1 in \cite{ghosal01}, which allow determination of the posterior contraction rates through checking certain conditions, but none of these results is easily and directly applicable in our case. The principle bottleneck is that a main assumption underlying these theorems is sampling from a fixed distribution, whereas in our high frequency setting, the distributions vary with $\Delta$. Therefore, for the clarity of exposition in the proof of our main theorem we will choose an alternative path, which consists in mimicking the main steps of the proof of Theorem~2.1, involving judiciously chosen statistical tests, as in \cite{ghosal00}, while also employing some results on the Dirichlet location mixtures of normal densities from \cite{ghosal01}. However, a significant part of technicalities we will encounter are characteristic of the decompounding problem only.

Throughout this section we assume that Assumptions~\ref{ass:truth} and~\ref{ass:prior} hold. Furthermore, in view of the discussion that followed Theorem~\ref{mainthm} we will without loss of generality assume that $0<\delta \le 4$. All the technical lemmas used in this section are collected in the appendices.

We start with the decomposition
\begin{equation}\label{eq:I+II}
\Pi( A(\varepsilon_n,M)|\mathcal{Z}_n^{\Delta} )=\Pi( A(\varepsilon_n,M)|\mathcal{Z}_n^{\Delta} )\phi_n+\Pi( A(\varepsilon_n,M)|\mathcal{Z}_n^{\Delta} )(1-\phi_n)=:\mathrm{I}_n+\mathrm{II}_n,
\end{equation}
where $0\leq\phi_n\leq 1$ is a sequence of tests based on observations $\mathcal{Z}_n^{\Delta}$ and with properties to be specified below. The idea is to show that the terms on the right-hand side of the above display separately converge to zero in probability. The tests $\phi_n$ allow one to control the behaviour of the likelihood ratio
\begin{equation*}
%\label{lr}
\mathcal{L}_n^{\Delta}(\lambda,f)=\prod_{i=1}^n \frac{k_{\lambda,f}^{\Delta}(Z_i^{\Delta})}{k_{\lambda_0,f_0}^{\Delta}(Z_i^{\Delta})}
\end{equation*}
on the set where it is not well-behaved due to the fact that $(\lambda,f)$ is `far away' from $(\lambda_0,f_0).$ 

%For bounding $\mathrm{I}_n$ and $\mathrm{II}_n$ we need to construct appropriate tests $\phi_n$. 

\subsection{Construction of tests}

The next lemma is an adaptation of Theorem 7.1 from \cite{ghosal00} to decompounding. A proof is given  in Appendix~\ref{appB}. We use the notation $D(\eps,A,d)$ to denote the $\eps$-packing number of a set $A$ in a metric space with metric $d$, applied in our case with $d$ the scaled Hellinger metric $h^\Delta$.
\begin{lemma}
\label{lemmA3}
Let $\mathcal{Q}$ be an arbitrary set of probability measures $\mathbb{Q}^\Delta_{\lambda,f}$.
Suppose for some non-increasing function $D(\eps),$ some sequence $\{\eps_n\}$ of positive numbers and every $\eps>\eps_n,$
\begin{equation}
\label{packn}
D\left( \frac{\eps}{2},\{ \mathbb{Q}^\Delta_{\lambda,f}\in\mathcal{Q}:\eps\leq h^\Delta(\mathbb{Q}^\Delta_{\lambda_0,f_0},\mathbb{Q}^\Delta_{\lambda,f})\leq 2\eps \},h^\Delta \right)\leq D(\eps).
\end{equation}
%with $D$ on the left-hand side denoting the $\eps/2$-packing number of the set
%\begin{equation*}
%\{ \mathbb{Q}^\Delta_{\lambda,f}\in\mathcal{Q}:\eps\leq h^\Delta(\mathbb{Q}^\Delta_{\lambda_0,f_0},\mathbb{Q}^\Delta_{\lambda,f})\leq 2\eps \}.
%\end{equation*}
Then for every $\eps>\eps_n$ there exists a sequence of tests $\{\phi_n\}$ (depending on $\eps>0$), such that
\begin{equation*}
\begin{split}
\ee_{\lambda_0,f_0}[\phi_n]&\leq D(\eps)\exp\left( -Kn\Delta\eps^2\right) \frac{1}{1-\exp\left(-Kn\Delta\eps^2\right)},%\label{A3.1}
\\
\sup_{\left\{\mathbb{Q}^\Delta_{\lambda,f}\in\mathcal{Q}:h^\Delta(\mathbb{Q}^\Delta_{\lambda_0,f_0},\mathbb{Q}^\Delta_{\lambda,f})>\eps\right\}}\ee_{\lambda,f}[1-\phi_n]&\leq \exp\left( -Kn\Delta\eps^2 \right),%, \label{A3.2}
\end{split}
\end{equation*}
where $K>0$ is a universal constant.
\end{lemma}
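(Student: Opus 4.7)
The approach is to follow the template of the proof of Theorem~7.1 in~\cite{ghosal00}, systematically replacing the Hellinger distance $h$ by its rescaled version $h^\Delta$: because $h^2=\Delta(h^\Delta)^2$, the usual Birg\'e--Le~Cam exponential in $n\eta^2$ becomes an exponential in $n\Delta\eta^2$, which is precisely the scaling required. The single-step ingredient I would invoke is the classical ``ball-against-point'' testing lemma (a variant of Lemma~7.2 of~\cite{ghosal00}): for any $\mathbb{Q}_1$ with $h(\mathbb{Q}^\Delta_{\lambda_0,f_0},\mathbb{Q}_1)\ge d$ and any radius $0<r<d$, there exists a test $\phi$ based on $n$ i.i.d.\ observations with
\[
\ee_{\lambda_0,f_0}[\phi]\le e^{-\widetilde K n(d-r)^2}\quad\text{and}\quad\sup_{\mathbb{Q}:\,h(\mathbb{Q}_1,\mathbb{Q})\le r}\ee_{\mathbb{Q}}[1-\phi]\le e^{-\widetilde K n(d-r)^2}
\]
for a universal constant $\widetilde K>0$. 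Its proof combines the Hellinger-affinity bound on the likelihood-ratio test between $\mathbb{Q}^\Delta_{\lambda_0,f_0}$ and $\mathbb{Q}_1$ with a short convexity/minimax argument upgrading the Type~II control from the single point $\mathbb{Q}_1$ to the Hellinger ball around it.

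I would next peel the alternative into dyadic shells
\[
S_k=\{\mathbb{Q}^\Delta_{\lambda,f}\in\mathcal{Q}:\,2^k\eps\le h^\Delta(\mathbb{Q}^\Delta_{\lambda_0,f_0},\mathbb{Q}^\Delta_{\lambda,f})\le 2^{k+1}\eps\},\quad k=0,1,2,\ldots,
\]
whose union contains $\{h^\Delta>\eps\}$. Since $2^k\eps>\eps_n$, the packing hypothesis~\eqref{packn} applied with $\eps$ replaced by $2^k\eps$ yields an $h^\Delta$-covering of $S_k$ by at most $D(2^k\eps)\le D(\eps)$ balls of $h^\Delta$-radius $2^{k-1}\eps$ (converting a maximal $h^\Delta$-packing into a cover and using monotonicity of $D$). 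Each such ball has $h$-radius $r_k=\sqrt\Delta\,2^{k-1}\eps$ and centre at $h$-distance at least $d_k=\sqrt\Delta\,2^k\eps$, so $d_k-r_k=\sqrt\Delta\,2^{k-1}\eps$, and the ball-against-point lemma furnishes a test $\phi_{k,i}$ with both errors bounded by $\exp(-Kn\Delta 4^k\eps^2)$ for some universal $K>0$ (absorbing $\widetilde K/4$ from $(2^{k-1})^2=4^{k-1}$).

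Finally I would set $\phi_n=\max_{k\ge 0,\,i}\phi_{k,i}$. Using $4^k\ge k+1$ and monotonicity of $D$, a union bound gives
\[
\ee_{\lambda_0,f_0}[\phi_n]\le\sum_{k\ge 0}D(\eps)\,e^{-Kn\Delta 4^k\eps^2}\le D(\eps)\sum_{k\ge 0}e^{-Kn\Delta(k+1)\eps^2}=\frac{D(\eps)\,e^{-Kn\Delta\eps^2}}{1-e^{-Kn\Delta\eps^2}},
\]
reproducing the first displayed bound. For the Type~II statement, any $\mathbb{Q}^\Delta_{\lambda,f}$ with $h^\Delta(\mathbb{Q}^\Delta_{\lambda_0,f_0},\mathbb{Q}^\Delta_{\lambda,f})>\eps$ lies in some shell $S_k$ ($k\ge 0$), hence in some covering ball, on which $\ee_{\lambda,f}[1-\phi_n]\le\ee_{\lambda,f}[1-\phi_{k,i}]\le e^{-Kn\Delta 4^k\eps^2}\le e^{-Kn\Delta\eps^2}$, as required.

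The main obstacle is the ball-against-point testing lemma itself, which is classical but not entirely trivial; I expect it to transfer verbatim from the unscaled Hellinger setting once the $\sqrt\Delta$ factor is tracked. The key fact underlying it is concavity of the Hellinger affinity $\rho(\mathbb{Q}^\Delta_{\lambda_0,f_0},\cdot)$ in its second argument, which combined with a minimax-theorem argument produces the uniform Type~II control over the ball. Once this lemma is in hand, the remainder of the proof is the routine dyadic-shell union bound sketched above, with the $n\Delta$ in the exponent emerging transparently from the identity $h^2=\Delta(h^\Delta)^2$ linking the two Hellinger distances.
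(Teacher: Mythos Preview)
Your proposal is correct and follows essentially the same route as the paper: both reduce to the proof of Theorem~7.1 in \cite{ghosal00} via the identity $h^2=\Delta(h^\Delta)^2$, so that the Birg\'e--Le~Cam exponent $nh^2$ becomes $n\Delta(h^\Delta)^2$. The paper simply observes that the $h^\Delta$-packing number equals the corresponding $h$-packing number with $\eps$ replaced by $\eps\sqrt{\Delta}$, quotes the ball-against-point test in the form of formulae (7.1)--(7.2) of \cite{ghosal00}, and then defers to the proof of Theorem~7.1 there; you have written that dyadic-shell argument out explicitly, but the content is the same.
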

In the proofs of Propositions~\ref{prop:c1} and~\ref{prop:c3} 
we need the  inequalities below. There exists a constant $\overline{C}\in(0,\infty)$ depending on $\underline{\lambda}$ and $\overline{\lambda}$ only, such that for all $\lambda_1,\lambda_2\in [\underline{\lambda},\overline{\lambda}]$ and $f_1,f_2$ it holds that
\begin{align}
\mathrm{K}(\mathbb{Q}^{\Delta}_{\lambda_1,f_1},\mathbb{Q}^{\Delta}_{\lambda_2,f_2}) & \leq \overline{C}\Delta (\mathrm{K}(\mathbb{P}_{f_1},\mathbb{P}_{f_2})+|\lambda_1-\lambda_2|^2),\label{eq:1K}\\
\mathrm{V}(\mathbb{Q}^{\Delta}_{\lambda_1,f_1},\mathbb{Q}^{\Delta}_{\lambda_2,f_2}) & \leq \overline{C}\Delta(\mathrm{V}(\mathbb{P}_{f_1},\mathbb{P}_{f_2})+\mathrm{K}(\mathbb{P}_{f_1},\mathbb{P}_{f_2})+|\lambda_1-\lambda_2|^2), \label{eq:1V}\\
h(\mathbb{Q}^{\Delta}_{\lambda_1,f_1},\mathbb{Q}^{\Delta}_{\lambda_2,f_2})&\leq \overline{C}\sqrt{\Delta}(|\lambda_1-\lambda_2| +  
h(\mathbb{P}_{f_1},\mathbb{P}_{f_2})) \label{eq:1h}.
%\\
%&=\overline{h}_1(\lambda_0,\lambda)+\overline{h}_2(f_0,f),
\end{align}
%with an obvious definition of $\overline{h}_1,\overline{h}_2.$
These inequalities can be proven in the same way as  Lemma~1 in \cite{gugu15}.

Let $\eps_n$ be as in Theorem~\ref{mainthm}. Throughout,  $\overline{C}$ denotes the above constant. For a constant $L>0$ define the sequences $\{a_n\}$ and $\{\eta_n\}$ by 
\begin{equation*}
a_n=L\log^{2/\delta}\left( \frac{1}{\eta_n} \right), \quad \eta_n=\frac{\eps_n}{4\overline{C}},
\end{equation*}
We will show that inequality \eqref{packn} holds true for every $\eps=M\eps_n$ with $M>2$ and the set of measures $\mathcal{Q}$ equal to 
\begin{equation*}
\mathcal{Q}_n=\{ \bb{Q}_{\lambda,f_{H,\sigma}}^{\Delta}: \lambda \in [\underline{\lambda}, \overline{\lambda}], H[-a_n,a_n]\geq 1-\eta_n, \sigma \in [\underline{\sigma},\overline{\sigma}] \},
\end{equation*}
As a first step, note that we have
\begin{align}
\label{eq:DN}
\log D\left( \frac{\eps}{2},\mathcal{Q}_n,h^\Delta \right) & \leq \log D\left( {\eps_n},\mathcal{Q}_n,h^\Delta \right)\\
&\leq \log N\left( \frac{\eps_n}{2},\mathcal{Q}_n,h^\Delta \right)
= \log N\left( \frac{\eps_n\sqrt{\Delta}}{2},\mathcal{Q}_n,h \right), \nonumber
\end{align}
where $N\left(\frac{\eps_n\sqrt{\Delta}}{2},\mathcal{Q}_n,h\right)$ is the covering number of the set $\mathcal{Q}_n$ with $h$-balls of size $\eps_n\sqrt{\Delta}/2.$
The  first inequality in \eqref{eq:DN} follows from assuming $M>2$. 
For bounding the righthand side in \eqref{eq:DN}, we have the following proposition.
\begin{prop}
\label{prop:c1}
We have 
\begin{equation}
\label{c1v}
\log N\left(\frac{\eps_n\sqrt{\Delta}}{2},\mathcal{Q}_n,h\right) \lesssim \log^{4/\delta+1}\left( \frac{1}{\eps_n} \right),
\end{equation}
\end{prop}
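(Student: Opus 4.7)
The plan is to decouple the dependence of $\mathcal{Q}_n$ on the three parameters $\lambda$, $\sigma$, and the mixing measure $H$ using the triangle-type estimate \eqref{eq:1h}, and then invoke an entropy bound for Dirichlet location mixtures of normal densities in the spirit of \cite{ghosal01}.

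\textbf{Decoupling step.} By \eqref{eq:1h}, whenever $|\lambda_1-\lambda_2|\leq \eta_n$ and $h(\mathbb{P}_{f_1},\mathbb{P}_{f_2})\leq \eta_n$ one has
\[
h(\mathbb{Q}^\Delta_{\lambda_1,f_1},\mathbb{Q}^\Delta_{\lambda_2,f_2})\ \leq\ 2\overline{C}\sqrt{\Delta}\,\eta_n\ =\ \tfrac{1}{2}\eps_n\sqrt{\Delta},
\]
since $\eta_n=\eps_n/(4\overline{C})$. Consequently
\[
N\!\left(\tfrac{1}{2}\eps_n\sqrt{\Delta},\mathcal{Q}_n,h\right)\ \leq\ N(\eta_n,[\underline{\lambda},\overline{\lambda}],|\cdot|)\cdot N(\eta_n,\mathcal{F}_n,h),
\]
where $\mathcal{F}_n=\{f_{H,\sigma}:H[-a_n,a_n]\geq 1-\eta_n,\ \sigma\in[\underline{\sigma},\overline{\sigma}]\}$. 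The first factor is $O(1/\eta_n)$, hence contributes only $O(\log(1/\eps_n))$ to the log-covering number, which is absorbed in the final bound.

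\textbf{Entropy of the mixture class.} For the main factor $\log N(\eta_n,\mathcal{F}_n,h)$, the plan is to follow the strategy of \cite[Theorem 3.1 and its proof]{ghosal01}: (i) discretise $\sigma$ on an $\eta_n$-net of $[\underline{\sigma},\overline{\sigma}]$, using continuity of $\sigma\mapsto f_{H,\sigma}$ in $h$ to control the induced error; (ii) for each $\sigma$ on the net, replace the restriction $H|_{[-a_n,a_n]}$ by a discrete measure supported on an $\eta_n$-grid of $[-a_n,a_n]$ via a moment-matching construction so that the induced normal mixture is $h$-close at scale $\eta_n$; (iii) handle the at-most-$\eta_n$ tail mass of $H$ outside $[-a_n,a_n]$ separately, exploiting the exponential decay of the Gaussian kernel. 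Counting the discretized $(\sigma,H')$-pairs yields a log-entropy polynomial in $a_n$ and $\log(1/\eta_n)$; substituting $a_n=L\log^{2/\delta}(1/\eta_n)$ and $\eta_n\asymp\eps_n$ produces the announced bound $\log^{4/\delta+1}(1/\eps_n)$.

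\textbf{Main obstacle.} The hard part is step (iii): the crude estimate $h^2\leq\|\cdot\|_{L^1}$ applied after a naive truncation would only yield $h$-error $\sqrt{2\eta_n}$ from the tail of $H$, which is asymptotically much larger than the target scale $\eta_n$. Obtaining the sharper $\eta_n$ bound requires exploiting the specific structure of the normal kernel---in particular its rapid tail decay---together with the careful calibration between $a_n$ and $\eta_n$ enforced by the prior tail condition \eqref{tailc} on the base measure $\alpha$ (which is precisely why $a_n$ is chosen proportional to $\log^{2/\delta}(1/\eta_n)$).
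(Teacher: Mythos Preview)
Your proposal is correct and follows essentially the same route as the paper: decouple $\lambda$ from $f$ via \eqref{eq:1h} to obtain $N(\tfrac{1}{2}\eps_n\sqrt{\Delta},\mathcal{Q}_n,h)\leq N(\eta_n,[\underline{\lambda},\overline{\lambda}],|\cdot|)\cdot N(\eta_n,\mathcal{F}_n,h)$, then bound the second factor by the entropy estimates for normal location mixtures in \cite{ghosal01}. The paper simply cites formulae (5.8)--(5.10), Theorem~3.1 and Lemma~A.3 of \cite{ghosal01} at this point, whereas you sketch their content as steps (i)--(iii); the ``obstacle'' you flag in (iii) is exactly what those cited results take care of.
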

\begin{proof}
Define
\begin{equation*}
 \scr{F}_n = 	\{ f_{H,\sigma}: H[-a_n,a_n]\ge 1-\eta_n, \sigma \in [\underline{\sigma}, \overline{\sigma}]\}. 	
\end{equation*}
Let $\{\lambda_i\}$ be centres of the balls from a minimal covering of $[\underline{\lambda}, \overline{\lambda}]$ with $|\cdot|$-balls of size $\eta_n$. Let $\{f_j\}$ be centres of the balls from a minimal covering of $\scr{F}_n$ with $h$-balls of size $\eta_n.$ For any $\bb{Q}_{\lambda, f_{H,\sigma}} \in \scr{Q }_n$, by \eqref{eq:1h} we have
\[ h(\bb{Q}_{\lambda, f_{H,\sigma}} , \bb{Q}_{\lambda_i, f_j}) \le  \frac{\eps_n\sqrt{\Delta}}{2}, \]
 by appropriate choices of $i$ and $j$. It follows that
\begin{equation*}
\log N \left(\frac{\eps_n\sqrt{\Delta}}{2},\mathcal{Q}_n,{h}\right) \leq \log N (\eta_n,[\underline{\lambda},\overline{\lambda}],|\cdot|) + \log N ( \eta_n ,  {\mathcal{F}}_n , {h} ).
\end{equation*}
Evidently,
\begin{equation*}
 \log N (\eta_n,[\underline{\lambda},\overline{\lambda}],|\cdot|) \lesssim \log \left( \frac{1}{\eps_n} \right).
\end{equation*}
As we assume $\delta \le 4$, we can apply the arguments on pp.~1251--1252 in \cite{ghosal01}, see in particular formulae (5.8)--(5.10) (cf.\ also Theorem 3.1 and Lemma A.3 there), which yield
\begin{equation*}
\log N ( \eta_n ,  {\mathcal{F}}_n , {h} ) \lesssim \log^{4/\delta+1}\left( \frac{1}{\eps_n} \right).
\end{equation*}
Combination of the above three inequalities implies the statement of the proposition.
\end{proof}
An application of Proposition~\ref{prop:c1} to \eqref{eq:DN} gives
\[	\log D\left( \frac{\eps}{2},\mathcal{Q}_n,h^\Delta \right)\lesssim 
\log^{4/\delta+1}\left( \frac{1}{\eps_n} \right) \le c_1 n\Delta\eps_n^2,
\]
for some positive constant $c_1$. Here,  the final inequality follows from our choice for $\eps_n$.
Hence,  \eqref{packn} is satisfied for 
\begin{equation*}
D(\eps)=\exp((c_1/M^2-K) n \Delta \eps^2).
\end{equation*}
By  Lemma~\ref{lemmA3} there exist tests $\phi_n$ such that for all $n$ large enough
\begin{align}
\ee_{\lambda_0,f_0}[\phi_n] &\leq 2 \exp\left( -(KM^2-c_1)n\Delta\eps_n^2 \right),\label{eq:tests0}
\\
\sup_{\left\{\mathbb{Q}^\Delta_{\lambda,f}\in\mathcal{Q}_n:h^\Delta(\mathbb{Q}^\Delta_{\lambda_0,f_0},\mathbb{Q}^\Delta_{\lambda,f})>\eps\right\}}\ee_{\lambda,f}[1-\phi_n]&\leq \exp\left( -Kn\Delta M^2\eps_n^2 \right)\label{eq:tests1}.
\end{align}

%-----------
\subsection{Bound on $\mathrm{I}_n$ in \eqref{eq:I+II}}

First note that by equation \eqref{eq:tests0}
\begin{equation*}
\ee_{\lambda_0,f_0}[ \mathrm{I}_n ] \leq \ee_{\lambda_0,f_0}[ \phi_n ] \le 2 \exp\left( -(KM^2-c_1)n\Delta\eps_n^2 \right).
\end{equation*}
Chebyshev's inequality implies that $\mathrm{I}_n$ converges to zero in $\mathbb{Q}_{\lambda_0,f_0}^{\Delta , n}$-probability as $n\rightarrow\infty,$ as soon as $M$ is chosen so large that $KM^2-c_1>0.$ \qed

\subsection{Bound on $\mathrm{II}_n$}

Now we consider $\mathrm{II}_n.$ We have
\begin{equation*}
\mathrm{II}_n=\frac{ \iint_{A(\varepsilon_n,M)} \mathcal{L}_n^{\Delta}(\lambda,f) \dd \Pi_1(\lambda) \mathrm{d}\Pi_2(f) (1-\phi_n) }{ \iint \mathcal{L}_n^{\Delta}(\lambda,f) \dd\Pi_1(\lambda) \mathrm{d}\Pi_2(f) }=:\frac{\mathrm{III}_n}{\mathrm{IV}_n}.
\end{equation*}
We will show that the numerator $\mathrm{III}_n$ goes exponentially fast to zero, in $\mathbb{Q}_{\lambda_0,f_0}^{\Delta , n}$-probability, while the denominator $\mathrm{IV}_n$ is bounded from below by an exponential function, with $\mathbb{Q}_{\lambda_0,f_0}^{\Delta , n}$-probability tending to one, in such a way that the ratio of $\mathrm{III}_n$ and $\mathrm{IV}_n$ still goes to zero in $\mathbb{Q}_{\lambda_0,f_0}^{\Delta , n}$-probability.
\medskip\\
{\it Bounding $\mathrm{III}_n$.} 
As $1_{\{A(\varepsilon_n,M)\}} \le 1_{\mathcal{Q}_n^c}+1_{\{A(\varepsilon_n,M)\cap \mathcal{Q}_n\}}$ we have
\begin{equation*}
\ee_{\lambda_0,f_0}[\mathrm{III}_n]
\leq \Pi(\mathcal{Q}_n^c)+\iint_{ \mathcal{Q}_n \cap A(\varepsilon_n,M) } \ee_{\lambda,f} [1-\phi_n]\dd\Pi_1(\lambda)\dd\Pi_2(f).
\end{equation*}
Here we applied Fubini's theorem to obtain the second term on the right-hand-side, which
by \eqref{eq:tests1}  is bounded by
$\exp(-KM^2n\Delta\eps_n^2).$
Furthermore,
\begin{equation*}
\Pi(\mathcal{Q}_n^c)=\Pi_2(H[-a_n,a_n]<1-\eta_n,\sigma\in[\underline{\sigma},\overline{\sigma}])\lesssim \frac{1}{\eta_n}e^{-ba_n^{\delta}},
\end{equation*}
%{\color{red} Please check: in the original version product measure $\Pi_1 \times \Pi_2$ appears. Begrijp ik niet.}
where the last inequality is formula (5.11) in \cite{ghosal01}. Hence
\begin{equation}
\label{iii.n}
\ee_{\lambda_0,f_0}[\mathrm{III}_n]\lesssim \frac{1}{\eta_n}e^{-ba_n^{\delta}} + \exp(-KM^2n\Delta\eps_n^2).
\end{equation}
\medskip\\
{\it Bounding $\mathrm{IV}_n$.} 
%\begin{equation}
%\label{bdel}
Recall $\mathrm{K}_\Delta=\mathrm{K}/\Delta$ and $\mathrm{V}_\Delta=\mathrm{V}/\Delta$. Let
\[	B^{\Delta}(\eps,(\lambda_0,f_0))=\left\{ (\lambda,f): K^\Delta( \mathbb{Q}^{\Delta}_{\lambda_0,f_0}, \mathbb{Q}^{\Delta}_{\lambda,f}) \leq\eps^2,  V^\Delta( \mathbb{Q}^{\Delta}_{\lambda_0,f_0}, \mathbb{Q}^{\Delta}_{\lambda,f}) \leq\eps^2 \right\}.\]
%\end{equation}
and
\begin{equation*}
\widetilde{\eps}_n=\frac{\log(n\Delta)}{\sqrt{n\Delta}}.
\end{equation*}
Note that $n\Delta \widetilde{\eps}_n^2\to\infty$ when $n\rightarrow\infty$. 

We will use the following bound, an adaptation of Lemma~8.1 in \cite{ghosal00} to our setting, valid for every $\eps>0$ and $C>0$,
\begin{equation}\label{eq:Qdelta}
\mathbb{Q}_{\lambda_0,f_0}^{\Delta , n}\big( \iint_{ B^{\Delta}(\eps,(\lambda_0,f_0)) } \mathcal{L}_n(\lambda,f)\dd\widetilde{\Pi}(\lambda,f) \leq \exp(-(1+C)n\Delta\eps^2)\big)
\leq \frac{1}{C^2n\Delta\eps^2},
\end{equation}
where
\begin{equation*}
\widetilde{\Pi}(\cdot)=\frac{\Pi(\cdot)}{\Pi( B^{\Delta}(\eps,(\lambda_0,f_0)) )}
\end{equation*}
is a normalised restriction of $\Pi(\cdot)$ to $B^{\Delta}(\eps,(\lambda_0,f_0))$.

By virtue of \eqref{eq:Qdelta}, with $\mathbb{Q}_{\lambda_0,f_0}^{\Delta , n}$-probability tending to one,  for any constant $C>0$ we have
\begin{align}
\lefteqn{\mathrm{IV}_n\geq\iint_{ B^{\Delta}(\widetilde{\eps}_n,(\lambda_0,f_0)) } \mathcal{L}^\Delta_n(\lambda,f)\dd \Pi_1(\lambda)\times\dd\Pi_2(f) } \label{eq:ivn} \\ & \qquad>\Pi( B^{\Delta}(\widetilde{\eps}_n,(\lambda_0,f_0)) )\,\,\exp(-(1+C)n\Delta \widetilde{\eps}_n^2).\nonumber
\end{align}
We will now work out the product probability on the right-hand side of this inequality.

\begin{prop}
\label{prop:c3}
It holds that
\begin{equation*}
\Pi\left(   B^{\Delta}(\widetilde{\varepsilon}_n,\mathbb{Q}_{\lambda_0,f_0})  \right) \gtrsim \exp\left( - \bar{c} \log^2 \left(\frac{1}{ \widetilde{\varepsilon}_n } \right) \right)
\end{equation*}
for some constant $\bar{c}.$
\end{prop}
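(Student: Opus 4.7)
The plan is to use the inequalities~\eqref{eq:1K} and~\eqref{eq:1V} to reduce the statement to a lower bound on the prior mass of a Kullback--Leibler--type neighbourhood of $(\lambda_0,f_0)$ expressed \emph{purely} in terms of the jump density $f$ and the intensity $\lambda$, and then to invoke the prior mass estimates available for Dirichlet location mixtures of normals from \cite{ghosal01}.

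Concretely, dividing~\eqref{eq:1K} and~\eqref{eq:1V} by $\Delta$ shows that there is a constant $C_0>0$, depending only on $\underline{\lambda},\overline{\lambda}$, such that whenever
\[
\mathrm{K}(\mathbb{P}_{f_0},\mathbb{P}_f)\le C_0^{-1}\widetilde{\eps}_n^{\,2},\quad
\mathrm{V}(\mathbb{P}_{f_0},\mathbb{P}_f)\le C_0^{-1}\widetilde{\eps}_n^{\,2},\quad
|\lambda-\lambda_0|\le C_0^{-1}\widetilde{\eps}_n,
\]
the pair $(\lambda,f)$ lies in $B^{\Delta}(\widetilde{\eps}_n,(\lambda_0,f_0))$. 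Since $\Pi=\Pi_1\times\Pi_2$, it therefore suffices to prove that each of the following two quantities is bounded below appropriately:
\begin{align*}
\Pi_1\!\left(|\lambda-\lambda_0|\le C_0^{-1}\widetilde{\eps}_n\right) &\gtrsim \widetilde{\eps}_n,\\
\Pi_2\!\left(\mathrm{K}(\mathbb{P}_{f_0},\mathbb{P}_f)\le C_0^{-1}\widetilde{\eps}_n^{\,2},\ \mathrm{V}(\mathbb{P}_{f_0},\mathbb{P}_f)\le C_0^{-1}\widetilde{\eps}_n^{\,2}\right) &\gtrsim \exp\!\bigl(-c\log^2(1/\widetilde{\eps}_n)\bigr).
\end{align*}

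The first bound is immediate from Assumption~\ref{ass:prior}(i): since $\pi_1$ is bounded below by $\underline{\pi}_1>0$ on $[\underline{\lambda},\overline{\lambda}]\supset\{\lambda_0\}$, the probability of a $\widetilde{\eps}_n$-interval around $\lambda_0$ is at least of order $\widetilde{\eps}_n$, which is certainly $\gtrsim\exp(-\log(1/\widetilde{\eps}_n))$.

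The second bound is the heart of the matter and is exactly the type of prior-concentration estimate proved for the Dirichlet location-mixture-of-normals prior in Section~5 of \cite{ghosal01}; under Assumption~\ref{ass:truth}(ii) ($f_0$ is itself such a mixture with compactly supported mixing distribution $H_0$ and smoothing parameter $\sigma_0\in[\underline{\sigma},\overline{\sigma}]$) and Assumption~\ref{ass:prior}(ii)--(iii) (the base measure $\alpha$ is continuous and bounded below on a neighbourhood of $\mathrm{supp}(H_0)$, has sub-exponential tails, and the prior on $\sigma$ has a density bounded below on an interval containing $\sigma_0$), their Theorem~5.1 (together with Lemma~A.3) yields exactly the required lower bound of order $\exp(-c\log^2(1/\widetilde{\eps}_n))$ on the prior mass of a simultaneous $\mathrm{K}$- and $\mathrm{V}$-neighbourhood of $f_0$. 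Multiplying the two lower bounds gives the proposition.

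The main obstacle in the execution is the second step: verifying line by line that our truth-and-prior assumptions match the hypotheses of the cited prior-mass lemma in \cite{ghosal01}, and checking that their argument delivers a simultaneous bound on $\mathrm{K}$ and $\mathrm{V}$ (rather than only on $\mathrm{K}$). The inequality bookkeeping in the first step is purely mechanical once~\eqref{eq:1K}--\eqref{eq:1V} are in hand, and the $\lambda$-part of the prior is easy because $\pi_1$ is bounded below on a neighbourhood of $\lambda_0$.
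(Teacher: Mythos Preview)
Your proposal is correct and follows essentially the same approach as the paper's own proof: reduce via \eqref{eq:1K}--\eqref{eq:1V} (divided by $\Delta$) to a product of a $\Pi_1$-mass bound on $\{|\lambda-\lambda_0|\lesssim\widetilde{\eps}_n\}$ and a $\Pi_2$-mass bound on a $(\mathrm{K},\mathrm{V})$-neighbourhood of $f_0$, handle the former trivially from Assumption~\ref{ass:prior}(i), and obtain the latter from the prior-mass arguments for Dirichlet location mixtures of normals in \cite{ghosal01}. The only cosmetic difference is that the paper points specifically to inequality~(5.17) in \cite{ghosal01} rather than to Theorem~5.1/Lemma~A.3, and makes the constant $c\le 1/\sqrt{5\overline{C}}$ explicit.
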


\begin{proof}
Let $0<c\leq 1/\sqrt{5\overline{C}}$ be a constant. Here $\overline{C}$ is the constant in \eqref{eq:1K} and \eqref{eq:1V}. 
% and define the sets
%\begin{equation*}
%G(c\widetilde{\eps}_n,(\lambda_0,f_0))=\left\{ (\lambda,f):K(\pp_{f_0},\pp_{f}) \leq c^2 \widetilde{\eps}_n^2,V(\pp_{f_0},\pp_{f}) \leq c^2 \widetilde{\eps}_n^2 ,|\lambda_0-\lambda|^2 \leq c^2 \widetilde{\eps}_n^2\right\}.
%\end{equation*}
By these inequalities it is readily seen that
\[	\left\{ (\lambda,f):K(\pp_{f_0},\pp_{f}) \leq c^2 \widetilde{\eps}_n^2,V(\pp_{f_0},\pp_{f}) \leq c^2 \widetilde{\eps}_n^2 ,|\lambda_0-\lambda|^2 \leq c^2 \widetilde{\eps}_n^2\right\}\subset B^{\Delta}(\widetilde{\eps}_n, \bb{Q}_{\lambda_0, f_0}^{\Delta}).\]
%\begin{equation*}
%G(c\widetilde{\eps}_n,(\lambda_0,f_0)) \subset B^{\Delta}(\widetilde{\eps}_n, \bb{Q}_{\lambda_0, f_0}^{\Delta}).
%\end{equation*}
It then follows by the independence assumption on $\Pi_1$ and $\Pi_2$ that
\begin{align*}
%\label{pp}
%\begin{split}
\Pi(   B^{\Delta}(\widetilde{\varepsilon}_n,\mathbb{Q}^\Delta_{\lambda_0,f_0})  )
&\geq  \Pi_1\left( |\lambda_0-\lambda| \leq c \widetilde{\eps}_n \right) \\ &
 \times \Pi_2 \left( f : \mathrm{K}(\mathbb{P}_{f_0},\mathbb{P}_{f}) \leq c^2{\widetilde{\varepsilon}_n^2},  \mathrm{V}(\mathbb{P}_{f_0},\mathbb{P}_{f}) \leq c^2{\widetilde{\varepsilon}_n^2} \right).
%\end{split}
\end{align*}
For the first factor on the right-hand side we have by \eqref{pi1} that
\begin{equation*}
\Pi_1\left( |\lambda_0-\lambda| \leq  c \widetilde{\eps}_n \right) \gtrsim \widetilde{\eps}_n.
\end{equation*}
As far as the second factor is concerned, for some constants $\overline{c}_1,\overline{c}_2$ it is bounded from below by
\[\overline{c}_1 \exp\left( - \overline{c}_2 \log^2 \left(\frac{1}{ \widetilde{\eps}_n } \right) \right),\]
by the same arguments as in  inequality (5.17) in \cite{ghosal01}. The result now follows by combining the two lower bounds.
\end{proof}
Combining \eqref{eq:ivn} with Proposition~\ref{prop:c3}, with $\mathbb{Q}_{\lambda_0,f_0}^{\Delta, n}$-probability tending to one as $n\rightarrow\infty,$ for any constant $C>0$ we have
\begin{align}
\mathrm{IV}_n &> \exp\left(-(1+C)n\Delta \widetilde{\eps}_n^2-\bar{c} \log^2 \left(\frac{1}{ \widetilde{\varepsilon}_n } \right)\right).\label{eq:ivn2}
\end{align}
We are now ready for showing the final steps of proving that $\mathrm{II}_n$ tends to zero in $\mathbb{Q}_{\lambda_0,f_0}^{\Delta, n}$-probability. Let $G_n$ denote the set on which Inequality~\eqref{eq:ivn2} is true. Then by~\eqref{iii.n} we obtain
\begin{align*}
\ee_{\lambda_0,f_0}[ \mathrm{II}_n 1_{G_n} ] & \lesssim \exp\left((1+C)n\Delta \widetilde{\eps}_n^2+\bar{c} \log^2 \left(\frac{1}{ \widetilde{\varepsilon}_n } \right)\right) \\
& \qquad\times \left[ \frac{1}{\eta_n}e^{-ba_n^{\delta}} + \exp(-KM^2n\Delta\eps_n^2) \right].
\end{align*}
Recall that $n\Delta \widetilde{\eps}_n^2=\log^2(n\Delta)$. Hence, the exponent in the first factor of this display is of order $\log^2 (n\Delta)$. Furthermore $a_n^\delta=L^\delta\log^2({4\overline{C}}/{\eps_n})$, which is of order $\log^2(n\Delta)$ as well. 
It follows that, provided the constants $L$ and $M$ are chosen large enough, the right-hand side of the above display converges to zero as $n\rightarrow\infty.$ Chebyshev's inequality then implies that $\mathrm{II}_n$ converges to zero in probability as $n\rightarrow\infty.$ This completes the proof of Theorem~\ref{mainthm}. \qed

\bigskip

{\bf Acknowledgement}: We wish to thank   Wikash Sewlal from Delft University of Technology for the simulation results of the  example with a mixture of four normals and the skewed density.

\appendix

\section{Additional lemmas and proofs}

\subsection{Proof of Lemma~\ref{lemma:delta}}\label{app:proof}

We give a detailed proof of Equality~\eqref{eq:hhdelta}. As we are interested in small values of $\Delta$, we make some necessary approximations. Starting point is the expansion for the `density' of $\mathbb{Q}^\Delta_{\lambda,f}$ with respect to the Lebesgue measure,
\[
e^{-\lambda\Delta}\delta_0(x)+(1-e^{-\lambda\Delta})\sum_{m=1}^{\infty} a_m(\lambda\Delta) f^{\ast m}(x),
\]
see \eqref{density}, with coefficients $a_m$ defined in \eqref{am}. It follows that we have the likelihood ratio
\begin{align*}
\frac{\dd \mathbb{Q}^\Delta_{\lambda,f}}{\dd \mathbb{Q}^\Delta_{\lambda_0,f_0}}(x) & =
\one_{x=0}e^{-(\lambda-\lambda_0)\Delta}+\one_{x\neq 0}\frac{(1-e^{-\lambda\Delta})\sum_{m=1}^{\infty} a_m(\lambda\Delta) f^{\ast m}(x)}{(1-e^{-\lambda_0\Delta})\sum_{m=1}^{\infty} a_m(\lambda_0\Delta) f_0^{\ast m}(x)} \\
& = e^{-(\lambda-\lambda_0)\Delta}\left(\one_{x=0}+\one_{x\neq 0}\frac{\lambda f(x)}{\lambda_0 f_0(x)}+o(\Delta)\right),
\end{align*}
where we collected terms of order $\Delta^m$ for $m\geq 2$ as $o(\Delta)$. Hence we get for the Hellinger affinity
\[
H(\mathbb{Q}^\Delta_{\lambda,f},\mathbb{Q}^\Delta_{\lambda_0,f_0}) = \int \sqrt{ \dd \mathbb{Q}^\Delta_{\lambda,f}   \dd \mathbb{Q}^\Delta_{\lambda_0,f_0} }
\]
the approximating expression 
\[
H(\mathbb{Q}^\Delta_{\lambda,f},\mathbb{Q}^\Delta_{\lambda_0,f_0})= e^{-(\lambda+\lambda_0)\Delta/2}\left(1+\Delta\sqrt{\lambda_0\lambda}H(f,f_0)+o(\Delta)\right).
\]
It follows that for $\Delta\to 0$, 
\begin{align*}
h^2(\mathbb{Q}^\Delta_{\lambda,f},\mathbb{Q}^\Delta_{\lambda_0,f_0})& = 2-2H(\mathbb{Q}^\Delta_{\lambda,f},\mathbb{Q}^\Delta_{\lambda_0,f_0}) \\
& = 2-2e^{-(\lambda+\lambda_0)\Delta/2}\left(1+\Delta\sqrt{\lambda_0\lambda}H(f,f_0)+o(\Delta)\right)\\
& = 2(1-e^{-(\lambda+\lambda_0)\Delta/2})-2e^{-(\lambda+\lambda_0))\Delta/2}\left(\Delta\sqrt{\lambda_0\lambda}H(f,f_0)+o(\Delta)\right).
\end{align*}
Hence, for $\Delta\to 0$,
\begin{align*}
\frac{1}{\Delta}h^2(\mathbb{Q}^\Delta_{\lambda,f},\mathbb{Q}^\Delta_{\lambda_0,f_0})& \to \lambda+\lambda_0-2\sqrt{\lambda_0\lambda}H(f,f_0) \\
& =  \int(\sqrt{\lambda f(x)}-\sqrt{\lambda_0f_0(x)})^2\,\dd x.
\end{align*}
Equality~\eqref{eq:hhdelta} follows. The proofs of the equalities \eqref{eq:kkdelta} and \eqref{eq:vvdelta} follow a similar line  of reasoning.
%-----------------

\subsection{Proof of Lemma~\ref{lemmA3}}\label{appB}

The proof is an adaptation of Theorem 7.1 from \cite{ghosal00} to decompounding.
In all what follows it is assumed that $\mathbb{Q}^\Delta_{\lambda,f}\in\mathcal{Q}$, but we suppress this assumption in the notation. Observe that
\begin{multline*}
D\left( \frac{\eps}{2},\{ \mathbb{Q}^\Delta_{\lambda,f}:\eps\leq h^\Delta(\mathbb{Q}^\Delta_{\lambda_0,f_0},\mathbb{Q}^\Delta_{\lambda,f})\leq 2\eps \},h^\Delta \right)\\
=D\left( \frac{\eps\sqrt{\Delta}}{2},\{ \mathbb{Q}^\Delta_{\lambda,f}:\eps\sqrt{\Delta}\leq h(\mathbb{Q}^\Delta_{\lambda_0,f_0},\mathbb{Q}^\Delta_{\lambda,f})\leq 2\eps\sqrt{\Delta} \},h \right).
\end{multline*}
From this point on the arguments from the proof of Theorem 7.1 in \cite{ghosal00} are applicable (with $\eps$ replaced by $\eps\sqrt{\Delta}$) and eventually lead to the desired result. The role of formulae (7.1)--(7.2) in that proof are played in the present context by \eqref{eq:t1} and \eqref{eq:t2} below.

For a given $(\lambda_1,f_1)$ there exists a sequence of tests $\phi_n$ based on $\mathcal{Z}_n^{\Delta},$ such that
\begin{align}
\ee_{\lambda_0,f_0}[\phi_n] & \leq \exp\left( -\frac{1}{2}n\Delta h^\Delta(\mathbb{Q}^\Delta_{\lambda_0,f_0},\mathbb{Q}^\Delta_{\lambda,f})^2 \right),\label{eq:t1}\\
\sup_{ h^\Delta(\mathbb{Q}^\Delta_{\lambda,f},\mathbb{Q}^\Delta_{\lambda_1,f_1}) < h^\Delta(\mathbb{Q}^\Delta_{\lambda_0,f_0},\mathbb{Q}^\Delta_{\lambda_1,f_1}) } \ee_{\lambda,f}[1-\phi_n]&\leq \exp\left( -\frac{1}{2}n\Delta h^\Delta(\mathbb{Q}_{\lambda_0,f_0},\mathbb{Q}_{\lambda,f})^2 \right).\label{eq:t2}
\end{align}

These two inequalities simply follow by rewriting  the inequalities
\begin{align*}
\ee_{\lambda_0,f_0}[\phi_n] & \leq \exp\left( -\frac{1}{2}nh^2(\mathbb{Q}_{\lambda_0,f_0}^{\Delta},\mathbb{Q}_{\lambda,f}^{\Delta}) \right),\\
\sup_{ h(\mathbb{Q}_{\lambda,f}^{\Delta},\mathbb{Q}_{\lambda_1,f_1}^{\Delta}) < h(\mathbb{Q}_{\lambda_0,f_0}^{\Delta},\mathbb{Q}_{\lambda_1,f_1}^{\Delta}) } \ee_{\lambda,f}[1-\phi_n]&\leq \exp\left( -\frac{1}{2}n h^2(\mathbb{Q}_{\lambda_0,f_0}^{\Delta},\mathbb{Q}_{\lambda,f}^{\Delta}) \right).
\end{align*}
which are proved on pp.~520--521 in \cite{ghosal00} and rely upon the results in \cite{birge84} and \cite{lecam86}.

%-----------------
%\subsection{Additional lemmas}
%\label{app}

\subsection{Proof of Lemma~\ref{lem:postpars}}\label{proof2}

As the priors for $\psi_1,\ldots, \psi_J$  are independent, we obtain that 
\begin{align*} p(\psi \mid \mu, \tau, z, \ba) &= p(\psi \mid \ba) \propto \prod_{j=1}^J \left( e^{-\psi_j T} 
\psi_j^{s_j} \pi(\psi_j)\right)\\ &=\prod_{j=1}^J \left( e^{-(\psi_j T+\beta_0)} 
 \psi_j^{s_j+\alpha_0-1} \right), 
\end{align*}
 which proves the first statement of the lemma.

For $(\mu, \tau)$ we get 
\begin{align*}
p(\mu, \tau \mid z, \ba) &\propto 
\prod_{i \in \scr{I}} \phi\left(z_i ; a_i'\mu, n_i/\tau\right) 
\\ & \times \tau^{\alpha_1-1} e^{-\beta_1\tau} \tau^{J/2} \exp\left(-\frac{\tau\kappa}{2} \sum_{j=1}^J (\mu_j-\xi_j)^2\right).
\end{align*}
This is proportional to 
\[ \tau^{\alpha_1-1+(I+J)/2}  \exp\left(-\beta_1\tau - \frac{D(\mu)}{2} \tau \right), \]
where
\[ D(\mu)=\kappa \sum_{j=1}^J (\mu_j-\xi_j)^2 + \sum_{i \in \scr{I}} n_i^{-1} \left( z_i - a_i' \mu\right)^2. \]
From this expression it is easily seen that  we can integrate out $\mu$ to obtain the distribution of $\tau$, conditional on $(z, \ba)$.  To get this right,  write $D(\mu)$ as a quadratic form of $\mu$: 
\[ D(\mu) = \mu' P \mu - 2q' \mu +R.  \]
By completing the square, we find that 
\[  \int \exp\left( -\frac{\tau}{2} D(\mu) \right) d \mu = e^{-\tau R/2} \int \exp\left( -\frac12 \mu \tau P \mu + \tau q' \mu \right) d \mu.\]
The integrand is (up to a proportionality constant), the density of a bivariate normal random vector with mean vector $P^{-1}q$ and covariance matrix $\tau^{-1} P^{-1}$ evaluated in $\mu$. This implies that the preceding display equals
\[	e^{-\tau R/2}  (2 \pi)^{J/2} \sqrt{|\tau^{-1} P^{-1}|} \exp\left( \frac12 \tau q' P^{-1} q\right). \]
We conclude that 
\[	p(\tau \mid z, \ba)  \propto 
\tau^{\alpha_1+I/2-1} \exp\left(-(\beta_1+\frac12(R - q' P^{-1} q))\tau\right),\]
which proves the asserted Gamma distribution of $\tau$. This computation also immediately leads to the assertion on the distribution of $\mu$. We finally show that the rate parameter appearing for $\tau$ is positive.
By definition $D(\mu)\ge 0$ for all $\mu$. This implies that  
$D(P^{-1} q)=q' P^{-1} q-2 q' P^{-1}q+R=R-q' P^{-1} q\ge 0$.

\bibliographystyle{plainnat}

\end{document}